\theoremstyle{dgthm}
\newtheorem{theorem}{Theorem}
\newtheorem{corollary}[theorem]{Corollary}
\newtheorem{lemma}[theorem]{Lemma}
\theoremstyle{dgdef}
\newtheorem{remark}[theorem]{Remark}
\newcommand{\dmn}{{\Omega}}
\newcommand{\rob}{\partial \Omega}
\newcommand{\norm}[1]{\left\lVert#1\right\rVert}
\newcommand{\abs}[1]{\left|#1\right|}
\newcommand{\eps}{\varepsilon}
\newcommand{\TwoNorm}[2]{{\left\lVert#1\right\rVert}_{L^2(#2)}}
\newcommand{\HNorm}[2]{{\left\lVert#1\right\rVert}_{H^1(#2)}}
\newcommand{\HHNorm}[2]{{\left\lVert#1\right\rVert}_{H^2(#2)}}
\newcommand{\ii}{\mathbf{i}}
\newcommand{\T}{\mathcal{T}}
\newcommand{\nei}{\omega}
\newcommand{\Ctensor}{\mathfrak{C}}
\numberwithin{theorem}{section}
\numberwithin{equation}{section}
\begin{document}

  \startpage{1}
  \aop

\title{Multiscale Sub-grid Correction Method for 
       Time-Harmonic High-Frequency 
       Elastodynamics with Wavenumber Explicit  Bounds}
\runningtitle{Elastic Helmholtz Equation}

\author[1]{Donald L. Brown}
\author*[2]{Dietmar Gallistl}
\runningauthor{D. L. Brown and D. Gallistl}
\affil[1]{\protect\raggedright 
The Equity Engineering Group,
Applied Research and Development Division.
20600 Chagrin Blvd \#1200, Shaker Heights, OH 44122, USA}
\affil[2]{\protect\raggedright 
Friedrich-Schiller-Universit\"at Jena, Institut f\"ur Mathematik, 
Ernst-Abbe-Platz 2, 07743 Jena, Germany, 
e-mail: dietmar.gallistl[[AT symbol]]uni-jena.de}
	
	
\abstract{%
The simulation of the  elastodynamics equations  at high-frequency suffers from 
the well known pollution effect.
We present a Petrov--Galerkin multiscale sub-grid correction 
method that remains pollution free in natural  resolution and 
oversampling regimes. This is accomplished by generating corrections 
to coarse-grid spaces 
with supports determined by
oversampling lengths 
related to the $\log(k)$, $k$ being the wavenumber. Key to this method
are polynomial-in-$k$ bounds for stability constants and related 
inf-sup constants. To this end,
we establish polynomial-in-$k$ bounds for the 
elastodynamics stability constants in general Lipschitz domains
with radiation boundary conditions in $\mathbb{R}^3$. Previous
methods relied on variational techniques, Rellich identities,
and geometric constraints. In the context of elastodynamics,
these suffer from the need to hypothesize a Korn's inequality
on the boundary. 
The methods in this work are based  on 
boundary integral operators and estimation of Green's function's 
derivatives dependence on $k$  and do not require this extra
hypothesis. 
We also implemented numerical examples in two and
three dimensions to show the method eliminates pollution in
the natural resolution and oversampling regimes, as well as
performs well when  compared to standard Lagrange finite elements. 
}

\keywords{elastic Helmholtz equation,
time-harmonic wave propagation,
stability estimates,
multiscale method}

\maketitle

\section{Introduction}
Modelling and simulating high-frequency wave propagation in complex
media is a computationally demanding process. The need in applications
to simulate wave propagation with accurate and robust numerical methods
is wide ranging. In acoustics applications such as the automotive 
and aerospace industries, the need to understand sound propagation is critical 
for vibration control and consumer comfort. For more complex 
mechanical media, the acoustic (Helmholtz) equation is not sufficient 
to describe the real propagation of signals and waves. This is the
case in subsurface seismic imaging applications, whereby attenuation
from the elastic properties must be taken into account. 
The simulation of accurate signal propagation through the subsurface
is utilized in calibrating the material properties in earth models 
and thus must be fast and robust. This is utilized in application 
domains ranging from environmental to petroleum exploration, 
and even mining engineering applications. 

It has been known for many years that at high-frequency, using
numerical methods in 
 Helmholtz type problems yields a pollution effect in the solution
 if the mesh parameter, $h$, 
 is able not to resolve the effects of high-frequency $k$. It is has
 been shown 
\cite{babuska1997pollution}
that using a finite stencil completely eliminating the
 pollution effect is impossible 
 for two or more space dimensions
 (while 
 the authors of \cite{babuska1997pollution} 
 were able to exhibit a pollution-free method in one space dimension).
 There
 are a wide range of methodologies and techniques for trying to
 combat the pollution error and here we mention only a few.  
 A plane wave Lagrange multiplier technique is utilized in 
 \cite{tezaur2006three} for the mid-frequency range. 
 Utilizing a Discontinuous Galerkin (DG) formulation for 
 both $h$ and $hp$ the authors develop methods for high-frequency
 in \cite{feng2009discontinuous,feng2011hp}.  In the  boundary
 integral method setting,  asymptotic  methods may be used 
 \cite{chandler2012numerical}. 
 {By going to a fully DG context
 and computing an optimal test space, the authors in 
 \cite{zitelli2011class} are able to obtain a pollution free method
 in one space dimension.}
 A breakthrough was achieved by relating the 
 polynomial order $p$ to the frequency $k$ in a logarithmic way 
 utilizing $hp$ methods in 
 \cite{melenk2010convergence,melenk2011wavenumber}. 
 In this work, we shall utilize a method based on the so-called 
 Local Orthogonal Decomposition (LOD) \cite{MP14}.
 
 The LOD method is based on utilizing quasi-interpolation operators 
 to build a fine scale or detail space. Then, by forcing orthogonality 
 an augmented coarse-space is constructed. However, the 
 supports
 of 
 these corrections to the original coarse-space are global. 
 To make the scheme computationally efficient a truncation to local 
 patch procedure is implemented. The errors of such a truncation can be 
 carefully tracked \cite{Henning.Morgenstern.Peterseim:2014,MP14}.
 The method originally was conceived to handle elliptic
 problems with very rough and multiscale coefficients. 
 However, since its inception it has seen generalizations to 
 semi-linear equations
 \cite{HMP13,HMP12}, oversampling methods \cite{HP13}, problems 
 with microstructure \cite{Brown.Peterseim:2014}, and parabolic
 problems \cite{maalqvist2015multiscale}, just to name a few.
For this paper, we utilize the method's effective ability to eliminate
pollution in optimal coarse-grid, $H$, and frequency, $k$, 
regimes 
\cite{BrownGallistlPeterseim,GallistlPeterseim2015,Peterseim2016}. 
Assuming polynomial-in-$k$ growth of stability and inf-sup constants 
of the continuous problem, and supposing patch truncations 
(oversampling) parameter $m$ of order $\log(k)$, we obtain a pollution 
free method in the resolution condition, $Hk\lesssim 1$ range. 

However, polynomial-in-$k$ growth of stability constants is not 
guaranteed as trapping domains may yield constants of order 
$\exp(k),$ for certain frequency values
\cite{betcke2011condition,chandler2009condition}.
The study and calculation of the stability and related inf-sup 
constants and their dependence on $k$ is a vivid area of active
research
that dates back to the early works
\cite{MorawetzLudwig1968,Morawetz1975,Vainberg1975}
as well as
\cite{FengSheen1994}
.
 In the acoustic setting, given certain geometric and
convexity conditions, the work of  
\cite{melenk1995generalized} gave  polynomial (constant) bounds  
in $k$.   Subsequent generalizations and extensions of these methods
in the Helmholtz setting can be found in 
\cite{cummings2006sharp,hetmaniuk2002fictitious,hetmaniuk2007stability,Spence2014,BaskinSpenceWunsch2016}.
In general, these methods rely on variational techniques that 
utilize the special  test function $x\cdot \nabla u$, Rellich
identities, geometric constraints, and various boundary condition 
constraints. These methods have also been extended to the
case of smooth weakly heterogeneous coefficients 
\cite{BrownGallistlPeterseim}, although with serious constraints on 
coefficients that can be considered. 
 
 Prior to this work, a key issue in using this variational and 
 Rellich identities method for the elastodynamics (elastic Helmholtz) 
 case is the fact that the stress terms arrive on certain boundary
 integrals. In standard elasticity analysis,  Korn's second inequality
 (see \cite{mclean2000strongly}) is employed to handle these terms in 
 the interior of the domain. However, in the fundamental work of 
 \cite{cummings2006sharp}, to obtain gradient lower bounds on these 
 stress terms a ``boundary'' Korn's inequality must be 
 \emph{conjectured} of the form
 \begin{align}\label{boundarykorn}
 	 	\norm{\frac{1}{2}\left(\nabla u+(\nabla u)^T \right)}_{L^2(\partial \Omega)}^2
 	 	\geq 
 	 	C\left(\norm{\nabla u}_{L^2(\partial \Omega)}^2 -
 	 	  \norm{u}_{L^2(\partial \Omega)}^2 \right),
 \end{align}
 for some boundary  
 $\partial \Omega$. Here $\Omega\subset \mathbb{R}^{d}, d=2,3.$
 Although such a result may seem reasonable it is as yet unproven 
 and must be assumed to obtain polynomial-in-$k$ bounds for the 
 elastodynamics case.

However, in the recent works of
  \cite{Esterhazy,melenk2010convergence,MelenkBIO,Spence2014},
  a new technique 
 is developed in the Helmholtz case without using the variational 
 techniques and instead relying on boundary integral operators, 
 estimates-in-$k$ of Green's functions, and Green's identity. 
 These techniques make minimal (Lipschitz domain) geometric
 assumptions on the domain, but must be used for radiation 
 (Robin type) boundary conditions. It is this technique that we 
 use in this work for the elastodynamics case with radiation boundary
 conditions. The main theoretical contribution of this work 
is to show methods in \cite{Esterhazy,melenk2010convergence,melenk2011wavenumber} can be extended to the case of elastodynamics 
 (elastic Helmholtz) and establish polynomial-in-$k$ bounds without the use of a conjectured boundary
 Korn's inequality \eqref{boundarykorn} in $\mathbb{R}^3$.
   We finally mention the independent work
  \cite{ChaumontNicaise2018},
  which establishes similar stability bounds
  (with sharper $k$-dependence but a slightly different geometric
   setting).

The proof of the polynomial bounds requires many auxiliary computations
and results, so we sketch the organization of the proof and paper 
as follows. In Section~\ref{elastodynamicsgoverning}, we present 
the problem set up of the elastodynamics equations with radiation
boundary conditions in the frequency domain and its related variational
forms.  We then introduce the Green's function, $G_k$ and related potential operators. 
In this mechanics context, $G_k$ is referred to as the Kupradze 
matrix \cite{kupradze1965potential,kupradze2012three}. 
 We define the Newton potential, $N_k$
and  present $k$-dependent estimates as in \cite{melenk2010convergence}. 
From here we obtain estimates for the single 
and double layer potentials, denoted  
$\widetilde{V}_{k},\widetilde{K}_{k},$ respectively.
This is accomplished by employing a technique from \cite{Spence2014}, where
layer potential estimates are obtained  for nontrivial $k$.
Then, we utilize standard a-priori and variational techniques,
along with the 
 ideas from 
 \cite{FengSheen1994,Esterhazy},
where  we  write the solution via single
 and double layer potentials
by   Green's identity. 
Using the estimates for the layer potentials 
we obtain an estimate for $u$ in terms of known boundary norms of
$u$ and $\sigma(u) \nu$. This gives polynomial-in-$k$ bounds as
well as the related inf-sup constant.

In Section~\ref{s:multiscale}, we present the ideas of the multiscale
sub-grid correction algorithm as in 
\cite{BrownGallistlPeterseim,GallistlPeterseim2015,Peterseim2016}.
Here we present the algorithm as well as the basic error analysis.
We implemented examples for both two and three dimensional examples
and see that the method performs well in handling pollution effects 
when compared to the standard Lagrange finite elements. 

Finally, the Appendix~\ref{NPSection} is devoted to the proof of the
main Newton potential estimates.
The $k$-dependence of derivatives of  Green's function 
(Kupradze matrix) 
is critical for the estimates in 
Section~\ref{elastodynamicsgoverning}. 
We use the Fourier techniques to estimate the Green's function 
similar to \cite{melenk2010convergence}.
The key observation, suggested by an anonymous reviewer in a previous version of the manuscript, 
is to notice the additional term of the Green's function has a cancellation in the leading order of the singularity. This observation greatly simplifies 
the analysis of a previous version by the authors.
The Green's function estimate then yields estimates for $N_k$.

\bigskip
Standard notation on complex-valued Lebesgue and Sobolev spaces
applies in this paper.
The real and imaginary part of $z\in\mathbb C$ reads
$\Re z$ and $\Im z$, respectively.
The imaginary unit is denoted by $\ii$ and
the bar denotes complex conjugation
$\bar z = \Re z - \ii \Im z$.
The analysis in this paper is explicit in the 
wavenumber $k$ and in the mesh-size parameters $H$ and $h$
of the finite element spaces.
Generic constants, often denoted by $C$ and possibly having different
values at different occurrences, are independent of
those parameters.
An inequality $A\leq CB$, will be frequently abbreviated by
$A\lesssim B$.
The notation $A\approx B$ means $A\lesssim B \lesssim A$.
We focus on the regime of large wave numbers $k$ and will sometimes 
use that $k$ is sufficiently large.

\section{Elastodynamic Equation in the  Frequency Domain}\label{elastodynamicsgoverning}

In this section, we will introduce the governing equations for the
three dimensional elastodynamic equation. Here we introduce the 
related variational form, as well as the critical  Korn 
(in the interior of the domain) and G\aa rding inequalities. 
We will then prove the wavenumber explicit bounds for the solution 
and show that we obtain frequency $k$-polynomial bounds of the 
solution given a radiation boundary condition on a bounded connected
Lipschitz domain. From this we are able to obtain an estimate for
the inf-sup stability constant utilized heavily in our multiscale 
numerical algorithm.

\subsection{Elastodynamics Governing Equations}\label{ss:governing}
 We now begin the problem setting. First, let 
 $\Omega\subset \mathbb{R}^3$ be a bounded open connected Lipschitz 
 domain in $\mathbb{R}^3$ and 
take 
$f\in L^2(\Omega)^3$
and $g\in L^2(\rob)^3$.
We suppose that $u$ satisfies the elastodynamic equation 
(time-harmonic elastic wave equation in the frequency domain) with  
radiation boundary conditions. 
We suppose that our material
satisfies a homogeneous isotropic stress tensor
\begin{equation}\label{isostress}
\sigma(u)=2 \mu \eps(u)+\lambda \text{tr}(\eps(u))=\mu (\nabla u+(\nabla u)^T)+\lambda \text{div}(u)I ,
\end{equation}
where the symmetric gradient is given by  
$\eps(u)=(\nabla u+(\nabla u)^T)/2$, $I$ is the identity matrix,
and superscript $T$ denotes transpose.
We let $u=(u^{(1)},u^{(2)},u^{(3)})$ be the solution of the
following governing equations
\begin{subequations}\label{mainPDE}
\begin{align}\label{main}
-\text{div}(\sigma(u))-k^2u&=f \text{ in } \dmn,\\
 \label{BCs}
\sigma(u) \nu +\ii k u&=g \text{ on } \rob
\end{align}
\end{subequations}
where $\nu$ is the outward normal.
Note that this is equivalent to the Lam\'{e} equations
\begin{align*}
 -\mu \Delta u -(\lambda+\mu )\nabla (\text{div}(u))-k^2u&=f \text{ in } \dmn,\\
\lambda \text{div}(u)  \nu+\mu (\nabla u+(\nabla u)^T) \nu +\ii k u&=g \text{ on } \rob. 
\end{align*}
Here $\lambda$ and $\mu$, are the Lam\'{e} constants with 
$\mu>0$ and $\lambda >-2\mu/3$ \cite{dahlberg1988,kenig1983boundary}. 
We require  further that $\lambda+2\mu>0$ for strong ellipticity 
\cite[p.~297]{mclean2000strongly}. 
The boundary condition is the elastodynamic analogue of the 
Robin boundary condition 
$\nabla u \cdot \nu +\ii ku=g$ 
in the acoustic Helmholtz context.
In elasticity, a traction boundary condition is  required, here we 
define the traction conormal as
\begin{align}\label{traction}
\frac{\partial u}{\partial \nu}:=\sigma(u)\nu =\lambda \text{div}(u)  \nu+\mu (\nabla u+(\nabla u)^T) \nu,
\end{align}
which for notational brevity we will often denote $\partial_{\nu} u$. 
The corresponding  variational form can be arrived at via the Betti 
formula \cite{Hsiao}, multiplying  \eqref{mainPDE} by $\bar{v}$ and
integrating we obtain 
\begin{align*}
 -\int_{\dmn} \text{div}(\sigma(u))\cdot\bar{v}dx
 -\int_{\dmn}k^2u \cdot\bar{v}dx
 = 
 \int_{\dmn} \sigma(u):\nabla\bar{v}dx
 -\int_{\partial \dmn} \sigma(u)\nu  \bar{v}ds
 -\int_{\dmn}k^2u\cdot\bar{v}dx\\
 =
  \int_{\dmn}\left(\mu (\nabla u+(\nabla u)^T)+\lambda \text{div}(u)I\right):\nabla \bar{v}dx
 -\int_{\partial \dmn}(g-\ii ku)  \cdot\bar{v}ds
 -\int_{\dmn}k^2u\cdot\bar{v}dx.
\end{align*}
Here we have  full contraction denoted by $A:B=\sum_{ij}A_{ij}B_{ij}$. 
Thus, using the identity 
$\text{div}(u)I:\nabla \bar{v}=\text{div}(u)\text{div}(\bar{v})$, 
and $2(\nabla u+(\nabla u)^T):\nabla \bar{v}=(\nabla u+(\nabla u)^T):(\nabla \bar{v}+(\nabla \bar{v})^T),$ 
we may write the following variational form.
Find $u\in (H^1(\dmn))^3,$ satisfying
\begin{align}\label{varformequation}
 \Phi_{}(u,\bar{v})=F(v) \text{ for all } \bar{v}\in (H^1(\dmn))^3,
 \end{align}
 where
 $\Phi_{}(u,\bar{v})=\Phi_{\dmn}(u,\bar{v})+\Phi_{\partial \dmn}(u,\bar{v})$ 
 and $F$
 are given by
 \begin{subequations} \label{varform}
 \begin{align}
 &\Phi_{\dmn}(u,\bar{v})
   =
  \int_{\dmn} \left(\lambda\text{div}(u)\text{div}(\bar{v})+{2\mu}\eps(u)
              : \eps(v)-k^2 u\cdot\bar{v}\right)dx\\
 &\Phi_{\partial \dmn}(u,\bar{v})
   =
  \ii k \int_{\partial \dmn}u\cdot\bar{v}ds, 
  \qquad 
   F(v)=\int_{\partial \dmn}g \cdot\bar{v} ds+\int_{\dmn}f\cdot\bar{v}dx .
\end{align}
\end{subequations}
Here $dx$ is the standard Lebesgue volume measure, and $ds$ the 
standard Lebesgue surface measure. 
We will denote $\norm{\cdot}_{L^2(\dmn)}$ and $\norm{\cdot}_{H^s(\dmn)}$,
$s=1,2$, to be the standard Sobolev norms. When there is no 
ambiguity we will not differentiate between
the vector norms and scalar norms. 

Recall, for the open bounded
Lipschitz domain $\Omega\subset\mathbb R^3$,
Korn's second inequality (in the interior) \cite{mclean2000strongly} 
\begin{equation}\label{korn_second}
\norm{\varepsilon(u)}_{L^2(\Omega)}^2
\geq 
c_{\mathrm{Korn}} \norm{\nabla u}_{L^2(\Omega)}^2 -
  C_{\mathrm{Korn}}   \norm{u}_{L^2(\Omega)}^2 .
\end{equation}
We have for some $C,c$ positive and independent of $k$, that
\begin{equation}
\begin{aligned}\label{Garding}
 \Re \Phi_{\dmn}(u,\bar{u})&=2 \mu \norm{\varepsilon(u)}_{L^2(\dmn)}^2+\lambda \norm{\text{div}(u)}_{L^2(\dmn)}^2-k^2 \norm{u}_{L^2(\dmn)}^2\\
 &\geq C \norm{\nabla u}_{L^2(\dmn)}^2-(c+k^2) \norm{u}_{L^2(\dmn)}^2.
\end{aligned}
\end{equation}
Indeed, we see that this is the G\aa rding's inequality  on $(H^1(\dmn))^3$.
With an argument analogous to that of
\cite[Sect.~8.1]{melenk1995generalized} one can show that 
\eqref{varformequation} is uniquely solvable.
We will make the $k$-dependence precise in
Theorem~\ref{mainesttheorem} below.

\subsection{Green's Functions and Potentials' Estimates}\label{LayerSection}

In this 
subsection,
we will introduce the ideas and notation to analyze the first and second layer potentials in elastodynamics. We will introduce the Green's function (in this setting it is the Kupradze matrix) and related boundary and Newton potentials. These, via Somigliana's formula (Green's identiy), give a representation for the solution of the elastodynamics equation.

We begin with some notation following the style introduced in  \cite{dahlberg1988} and \cite{MelenkBIO}.
As above, let $\Omega\subset \mathbb{R}^3$ be a bounded, open, connected Lipschitz  domain in $\mathbb{R}^3$. 
Finally, we suppose the convention 
\cite{MelenkBIO}
that $k\geq k_0>0$ for some fixed $k_0$.
As seen in Section~\ref{kgrowth}, 
Theorem~\ref{t:infsup}, $k_0$ is related to the Korn's second inequality
 constant $C_{\text{Korn}}$, and depends only on the domain and material parameters, but not on $k$. 
In general, we are considered here with high-frequency (large $k$) problems.
%
%
We now define the Green's function, or in this setting often referred to as the  Kupradze matrix, corresponding to \eqref{main}.
The Green's function is the fundamental solution to the equation
\begin{align}
-\text{div}\left(\sigma\left(G_{k}(x-y)\right)\right)-k^2G_{k}(x-y)&=\delta(x-y) \text{ in } \dmn,
\end{align}
or expanded out into the Lam\'{e} equations form
\begin{align}
-\mu \Delta G_{k}(x-y) -(\lambda+\mu )\nabla (\text{div}(G_{k}(x-y)))-k^2G_{k}(x-y)&=\delta(x-y) \text{ in } \dmn
\end{align}
for the Dirac distribution $\delta$.

\begin{remark}
	There are many ways to obtain the Green's function of the above system.
For $d=2,3$, the Green's function can be obtained as in \cite{kitahara2014boundary} via H{\"o}rmander's method of parametrics
and derived via Radon transforms in \cite{Wang441}.
For a complete treatment of this subject we refer the readers to  \cite{kupradze1965potential,kupradze2012three}. 
\end{remark}

Letting $k_1^2=k^2(\lambda+2\mu )^{-1} \, , \,  k_2^2=k^2\mu^{-1}$, for $i,j=1,2,3$,
we write the Green's function as
\begin{subequations}\label{GreensFunction.main}
\begin{align}
\label{GreensFunction}
( G_{k}(r))_{ij}&=\frac{1}{4 \pi \mu }\left(\delta_{ij}\frac{e^{\ii  k_2 r}}{r}+ \frac{1}{k_2^2}\frac{\partial^2 }{\partial x_{i}\partial x_j}
\left(\frac{e^{\ii k_2 r}}{r}-\frac{e^{\ii k_1 r}}{r}\right)\right),\\
\label{GreensFunction.different}
( G^{E}_{k}(r))_{ij}&=\frac{1}{4 \pi \mu }\left( \frac{1}{k_2^2}\frac{\partial^2 }{\partial x_{i}\partial x_j}
\left(\frac{e^{\ii k_2 r}}{r}-\frac{e^{\ii k_1 r}}{r}\right)\right),\\
\label{GreensFunction.same}
( G^{H}_{k}(r))_{ij}&=\frac{1}{4 \pi \mu}\left(\delta_{ij}\frac{e^{\ii  k_2 r}}{r}\right),
 \end{align}
 \end{subequations} 
where $r=|x|$, is the standard Euclidean distance in spherical coordinates. 
Note that the elastic rotational term \eqref{GreensFunction.different} is smoother than it first appears. Indeed, note that $\left(\frac{e^{\ii k_2 r}}{r}-\frac{e^{\ii k_1 r}}{r}\right)\to 0$ as $r\to 0$, so that the leading order singularity is cancelled.

 \begin{remark}
 	Note that a two-dimensional representation for the elastodynamics Green's function exists. Indeed, from \cite{kitahara2014boundary}, we have 
 	$$
 	( G_{k}(r))_{ij}=\frac{\ii}{4  \mu }\left(\delta_{ij}H_0^{(1)}(k_{2}r)+ \frac{1}{k_2^2}\frac{\partial^2 }{\partial x_{i}\partial x_j}
 	\left(H_0^{(1)}(k_{2}r)-H_0^{(1)}(k_{1}r)\right)\right),
 	$$
 where $H_0^{(1)}$ are zero order Hankel functions of the first kind. 
 We 
   expect
  that similar to \cite{melenk2010convergence},
 the following estimates hold for the two-dimensional, but leave this to future investigations due to the extra technical difficulties in handling Hankel functions as opposed to merely exponential functions of the form $\exp(\ii k r)/r$. 
 \end{remark}

The Newton potential for $f\in (L^2(\dmn))^3$ with compact support in $\mathbb{R}^3$, is given by
\begin{align}\label{newton.layerk}
 (N_{k}(f))(x)=\int_{\dmn}G_{{k}}(x-y)f(y)dy, \text{ for } x\in \dmn.
\end{align}
Here we use the notation to mean $(N_{k}(f))_i=\sum_{j=1}^3\int_{\dmn}(G_{k})_{ij}(x-\cdot)f_j dy, i=1,2,3,$ and similarly for the operators defined below. 
We further define the layer potential operators   as in  \cite{kenig1983boundary} and  \cite{MelenkBIO} for $\varphi\in (L^2(\rob))^3$ and $x\in \dmn$ as
\begin{align}\label{layerpotential}
\widetilde{V}_k(\varphi)(x):=\int_{\partial \dmn}G_{{k}}(x-y)\varphi(y)ds_{y}, \hspace{.1in} \widetilde{K}_k(\varphi)(x):=\int_{\partial \dmn}\partial_{\nu_y} G_{{k}}(x-y)\varphi(y)ds_{y},
\end{align}
here $\partial_{\nu_y},$ to denote the traction conormal,  \eqref{traction}, with respect to $y$.

We then have a representation formula for \eqref{mainPDE} in the interior 
of the domain given by \eqref{solutionrepresentation} below.
This is the so-called  Somigliana's formula (Green's identity in matrix form)
\cite[\S1.6.2, p.~25]{kitahara2014boundary}, which we briefly describe here. 
Denote the operator corresponding to elastodynamics as 
${\mathcal L}_{k}=-\mu \Delta  -(\lambda+\mu )\nabla \text{div}-k^2$.
From the Green's identity we have for $u$ a solution to \eqref{mainPDE} 
for $f=0$
\begin{align*}
 \int_{\dmn}\left(G_{k}(x-y){\mathcal L}_{k}u(y)
  -  
   {\mathcal L}_{k}
  G_{k}(x-y)u(y)   \right)dy
 =
 \int_{\partial \dmn}\left(-G_{k}(x-y)\partial_{\nu_y}u(y) 
 + \partial_{\nu_y}G_{k}(x-y)u(y)   \right)ds_y.
\end{align*} 
Hence, we have the solution representation as 
\begin{align*}
 u(x)=\int_{\partial \dmn}\left(G_{k}(x-y)\partial_{\nu_y}u(y)
    -  \partial_{\nu_y}G_{k}(x-y)u(y)   \right)ds_y, \, x\in \dmn.
\end{align*}
Finally,  \eqref{mainPDE}
 has a single layer and double layer potential representation of the form
\begin{align}\label{solutionrepresentation}
 u(x)&=\widetilde{V}_{k}(\sigma(u) \nu)(x)-\widetilde{K}_{k}(u)(x), \, x\in \dmn, 
\end{align}
where $\sigma(u) \nu$ is defined via \eqref{traction}. 

The above potentials satisfy the following $k$-dependent bounds. 

\begin{theorem}[Newton potential estimates]\label{maintheorem}
Let $f\in (L^2(\dmn))^3,$  
for the Newton potential \eqref{newton.layerk} we have the estimate
\begin{align}\label{NPHTWO}
 k^{-1}\HHNorm{N_{k}^{}(f)}{\dmn}+ \HNorm{N_{k}^{}(f)}{\dmn}
 + k\TwoNorm{N_{k}^{}(f)}{\dmn}\leq C \TwoNorm{f}{\dmn},
\end{align}
where $C>0$ is independent of $k$ and depends only on 
 $\Omega,\mu,\lambda$. 
\end{theorem}

\begin{proof}
See Appendix \ref{NPSection}.
\end{proof}
 
With the Newton potential estimate, Theorem \ref{maintheorem}, we can establish the following layer potential estimates. 

\begin{lemma}[layer potential estimates]\label{l:layerpotentials}
 The layer potential operators
 $\widetilde{V}_k$ and $\widetilde{K}_k$ defined in 
 \eqref{layerpotential} satisfy, for any
 $\varphi\in (L^2(\rob))^3$, the estimates
 \begin{align*}
 \|\widetilde{V}_k(\varphi)\|_{L^2(\Omega)}
 \leq C k^{-1/2} \|\varphi\|_{L^2(\partial\Omega)}
 \quad\text{and}\quad
  \|\widetilde{K}_k(\varphi)\|_{L^2(\Omega)}
 \leq C k^{1/2}\|\varphi\|_{L^2(\partial\Omega)}.
\end{align*}
\end{lemma}
\begin{proof}
 The proof is very similar to that of 
 \cite[Lemma~4.3]{Spence2014} and we briefly show it here 
 for completeness and self-consistent reading.
 Recall the Newton potential $N_k$ from \eqref{newton.layerk}
 whose adjoint $N_k'$ is given by
\begin{align*}
 (N_{k}'(f))(x)
 =
 \int_{\dmn}\overline{G_{{k}}(x-y)}f(y)dy, \text{ for } x\in \dmn,
\end{align*}
and satisfies $N'_k(f) = \overline{N_k(\bar{f})}$.
With the definitions \eqref{layerpotential}
this leads to the identity
\begin{equation*}
 \int_\Omega \widetilde{V}_k(\varphi)(x) \overline{f(x)}\,dx
 =
 \int_{\partial\Omega} \varphi(x) \overline{N_k'(f)(x)}\,ds_x.
\end{equation*}
The multiplicative trace inequality therefore shows
\begin{equation*}
 \int_\Omega \widetilde{V}_k(\varphi)(x) \overline{f(x)}\,dx
\leq
 \|\varphi\|_{L^2(\partial\Omega)} \|N_k'f\|_{L^2(\partial\Omega)}
\leq
\|\varphi\|_{L^2(\partial\Omega)} 
 \|N_k'f\|_{L^2(\Omega)}^{1/2}
 \|N_k'f\|_{H^1(\Omega)}^{1/2}.
\end{equation*}
The Newton potential estimate \eqref{NPHTWO} 
from Theorem~\ref{maintheorem} and the 
identity $N'_k(f) = \overline{N_k(\bar{f})}$
therefore yield
\begin{equation*}
 \int_\Omega \widetilde{V}_k(\varphi)(x) \overline{f(x)}\,dx
\leq C k^{-1/2}
\|\varphi\|_{L^2(\partial\Omega)}  \|f\|_{L^2(\Omega)}.
\end{equation*}
Choosing $f=\widetilde{V}_k(\varphi)$ gives the first 
asserted estimate.
Analogous computations for the double-layer potential
read
 \begin{equation*}
\int_\Omega \widetilde{K}_k(\varphi)(x) \overline{f(x)}\,dx
 \int_{\partial\Omega} 
     \varphi(x) \overline{\partial_\nu N_k'(f)(x)}\,ds_x 
\leq
 \|\varphi\|_{L^2(\partial\Omega)} 
 \|N_k'f\|_{H^1(\Omega)}^{1/2}
 \|N_k'f\|_{H^2(\Omega)}^{1/2}
\end{equation*}
and the combination with \eqref{NPHTWO} implies
\begin{equation*}
\int_\Omega \widetilde{K}_k(\varphi)(x) \overline{f(x)}\,dx
\leq
C k^{1/2}
\|\varphi\|_{L^2(\partial\Omega)} 
\|f\|_{L^2(\Omega)}.
\end{equation*}
The choice $f=\widetilde{K}_k(\varphi)$ then proves the 
second claimed estimate.
\end{proof}

\subsection{\texorpdfstring{$k$}{k}-Polynomial Growth of the Stability Constant}\label{kgrowth}

Here we present the main $k$-growth  estimates of this work.
We establish polynomial growth of the stability constant with respect 
to wavenumber $k$ for system \eqref{mainPDE}.
We will first require a few auxiliary lemmas similar to those 
obtained for the Helmholtz case in
\cite{Esterhazy,Spence2014}.
Throughout this section, we will use the
following notation
$$
\norm{u}_{1,k}:= \left(\TwoNorm{\nabla u}{\dmn}^2+k^2\TwoNorm{u}{\dmn}^2\right)^{1/2}.
$$

\begin{lemma}\label{l:cauchydata}
 Let $\dmn\subset\,\mathbb{R}^3$ be a bounded, 
 open, connected
 Lipschitz domain.
 Let $u\in (H^1(\dmn))^3$ be a solution to \eqref{mainPDE} with $f=0$
 with $g\in(L^2(\partial \dmn))^3$. Then,
\begin{equation*}
  \| u\|_{L^2(\partial\Omega)} \leq k^{-1} \|g\|_{L^2(\partial\Omega)} 
 \quad\text{and}\quad
 \| \sigma(u)\nu\|_{L^2(\partial\Omega)} 
   \leq \|g\|_{L^2(\partial\Omega)}.
 \end{equation*}
\end{lemma}
\begin{proof}
As in  
\cite{ChandlerwildeMonk2008,Esterhazy},
taking $v=u$ in \eqref{varformequation}, 
taking imaginary parts,
and using the impedance boundary condition
$\sigma(u)\cdot \nu = g-\ii ku$ on $\partial\Omega$
we arrive at 
\begin{equation*}
  k \|u\|_{L^2(\partial\Omega)}^2 
  = \Im \int_{\partial\Omega}g\cdot\bar{u}\,ds
  = \Im \int_{\partial\Omega}(\sigma(u) \nu+\ii ku)\cdot\bar{u}\,ds.
\end{equation*}
The first of these identities and the Cauchy inequality result in 
the first stated estimate.
The identity plus elementary manipulations furthermore show
\begin{equation*}
  0
  = \Im \int_{\partial\Omega}(\sigma(u)\nu)\cdot\bar{u}\,ds
  = -\frac{1}{k}
         \Re \int_{\partial\Omega} 
             (\sigma(u)\nu)\cdot(\overline{\ii k u})\,ds
\end{equation*}
Inserting the impedance boundary condition for $\overline{\ii k u}$
proves the second claimed estimate.
\end{proof}

For the next lemma, we will need the representation of the solution  
of \eqref{mainPDE} of the form \eqref{solutionrepresentation},
 that is often referred to as Somigliana's in this context.
Here, $\widetilde{V}_{k}$ and $\widetilde{K}_{k}$ are the layer 
potentials defined in Subsection~\ref{LayerSection}.

\begin{lemma}\label{l:GradEst}
 Let $\dmn\subset\,\mathbb{R}^3$ be a bounded
 open, connected Lipschitz domain. 
 Let $u\in (H^1(\dmn))^3$ be a solution to \eqref{mainPDE}
 with $f=0$ and $g\in (L^2(\partial\Omega))^3$. 
 Then, we have the following estimate
 \begin{align*}
  \norm{u}_{1,k}
  &\leq C k^{1/2} \|g\|_{L^2(\partial\Omega)}.
 \end{align*}
\end{lemma}
\begin{proof}
As in \cite{Spence2014}, we use  Somigliana's formula
\eqref{solutionrepresentation} and the following estimates 
for the layer potentials from Lemma~\ref{l:layerpotentials}
 \begin{align*}
 \|\widetilde{V}_k(\varphi)\|_{L^2(\Omega)}
 \leq C k^{-1/2} \|\varphi\|_{L^2(\partial\Omega)}
 \quad\text{and}\quad
  \|\widetilde{K}_k(\varphi)\|_{L^2(\Omega)}
 \leq C k^{1/2}\|\varphi\|_{L^2(\partial\Omega)}
\end{align*}
to conclude
\begin{equation*}
 \|u\|_{L^2(\Omega)}
 \leq C
 k^{-1/2} \|\sigma(u) \nu\|_{L^2(\partial\Omega)}
 +
 k^{1/2} \|u\|_{L^2(\partial\Omega)} .
\end{equation*}
Combining this with Lemma~\ref{l:cauchydata}
finally implies the $L^2$ bound
\begin{equation*}
 \|u\|_{L^2(\Omega)}
 \leq C  k^{-1/2} \|g\|_{L^2(\partial\Omega)}.
\end{equation*}
Relations \eqref{Garding} and \eqref{varformequation}
and $k\gtrsim 1$ with the Cauchy inequality
prove
\begin{equation*}
 C \|\nabla u\|_{L^2(\Omega)}^2
 \leq
 (k^2+c) \|u\|_{L^2(\Omega)}^2
 + 
 \Re F(u)
 \lesssim 
 k^2 \|u\|_{L^2(\Omega)}^2
 +
 \|g\|_{L^2(\partial\Omega)}
  \|u\|_{L^2(\partial\Omega)} .
\end{equation*}
Applying Lemma~\ref{l:cauchydata} and inserting the above
$L^2$ bound concludes the proof.
\end{proof}

We now state and prove our main theorem.  
\begin{theorem}\label{mainesttheorem}
  Let $\dmn\subset\,\mathbb{R}^3$ be a bounded 
  open, connected
  Lipschitz domain.
  Let $u\in (H^1(\dmn))^3$ be a solution to \eqref{mainPDE}, with 
  $g\in(L^2(\partial \dmn))^3$ and $f\in(L^2(\dmn))^3$. Then, 
  there exists a constant $C>0$ independent of $k$,
  such that
  \begin{align*}
   \norm{u}_{1,k} 
    \leq  C \left(k^{1/2}\TwoNorm{g}{\rob}+ k\TwoNorm{f}{\dmn}\right).
  \end{align*}
\end{theorem}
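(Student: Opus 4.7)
The plan is to reduce Theorem~\ref{mainesttheorem} to the two previously-established $f=0$ lemmas by a Newton-potential splitting. Set $u_1 := N_k(f)$, the convolution of $f$ with the Kupradze-matrix fundamental solution studied in Section~\ref{NPSection}; by construction $u_1$ satisfies $-\mathrm{div}\,\sigma(u_1)-k^2 u_1 = f$ on all of $\mathbb{R}^3$. Then $u_2 := u-u_1 \in (H^1(\Omega))^3$ solves the homogeneous equation $-\mathrm{div}\,\sigma(u_2)-k^2 u_2=0$ in $\Omega$ with modified Robin data
\begin{equation*}
\sigma(u_2)\cdot\nu + \ii k u_2 \;=\; g - \sigma(u_1)\cdot\nu - \ii k u_1 \;=:\; \tilde g \qquad\text{on } \partial\Omega,
\end{equation*}
so that the already-proved $f=0$ case applies to $u_2$ with boundary datum $\tilde g$.

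Applying \eqref{boundaryestimate.g} to $u_2$ yields $\TwoNorm{u_2}{\partial\Omega}\leq k^{-1}\TwoNorm{\tilde g}{\partial\Omega}$, and inserting the boundary identity $\sigma(u_2)\cdot\nu = \tilde g - \ii k u_2$ gives $\TwoNorm{\sigma(u_2)\cdot\nu}{\partial\Omega}\leq 2\TwoNorm{\tilde g}{\partial\Omega}$. Substituting both into \eqref{GradEst} produces $\norm{u_2}_{1,k}\lesssim k^2 \TwoNorm{\tilde g}{\partial\Omega}$, and a triangle inequality on $\tilde g$ expands this to
\begin{equation*}
\norm{u_2}_{1,k} \;\lesssim\; k^2 \TwoNorm{g}{\partial\Omega} + k^2\,\TwoNorm{\sigma(u_1)\cdot\nu}{\partial\Omega} + k^3\,\TwoNorm{u_1}{\partial\Omega}.
\end{equation*}
The whole problem is thus reduced to polynomial-in-$k$ control of the traces of the Newton potential $u_1$: the target $k^{5/2}$ rate is recovered provided $\TwoNorm{\sigma(u_1)\cdot\nu}{\partial\Omega} + k\TwoNorm{u_1}{\partial\Omega}\lesssim k^{1/2}\TwoNorm{f}{\Omega}$, together with a volume bound $\norm{u_1}_{1,k}\lesssim k^{1/2}\TwoNorm{f}{\Omega}$ so that the triangle inequality $\norm{u}_{1,k}\leq \norm{u_1}_{1,k}+\norm{u_2}_{1,k}$ absorbs the contribution of $u_1$.

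The principal obstacle is pinning down the correct $k$-power in these Newton-potential trace and gradient estimates; this is precisely the job of the Fourier-side analysis of the Kupradze matrix in Section~\ref{NPSection}, closely modeled on the scalar Helmholtz blueprint of \cite{melenk2010convergence}. Everything else is bookkeeping built from tools already in hand: the layer-potential (Somigliana) representation \eqref{solutionrepresentation1}, Korn's second inequality \eqref{korn_second} applied \emph{interior} to $\Omega$ (inside the proof of \eqref{GradEst}), the radiation identity $\sigma(u)\cdot\nu = g-\ii k u$, and Young's inequality to absorb cross-terms. Crucially, because the entire argument routes boundary information through the Somigliana formula and the volume-potential splitting, the hypothetical boundary Korn inequality \eqref{boundarykorn} is never invoked---which is the central methodological gain over the Rellich-identity approach.
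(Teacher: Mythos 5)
Your proposal follows essentially the same route as the paper: split $u = u_1 + u_2$ with $u_1 = N_k(f)$ the Kupradze–Newton potential, reduce to the homogeneous problem with shifted Robin data $\tilde g$, control the boundary terms via \eqref{boundaryestimate.g} and the identity $\sigma\cdot\nu = \tilde g - \ii k u_2$, then combine with \eqref{GradEst} and the Newton potential bounds from Section~\ref{NPSection}. The only detail left implicit is that the boundary traces $\TwoNorm{\sigma(u_1)\cdot\nu}{\partial\Omega}$ and $k\TwoNorm{u_1}{\partial\Omega}$ are bounded by $k^{1/2}\TwoNorm{f}{\Omega}$ through the multiplicative trace inequality applied to the $H^2$--$H^1$--$L^2$ string \eqref{NPHTWO}, which is exactly what the paper invokes.
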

\begin{proof}
Taking the same approach as
 \cite{Esterhazy,Spence2014},
we transform the right 
hand side $f$ to the boundary.
We define 
$u_{0}=G_{k}*f$
(extending $f$ to zero outside $\dmn)$, 
where $G_{k}$ is the Green's function corresponding to \eqref{mainPDE}, 
written explicitly in Subsection~\ref{LayerSection}, \eqref{GreensFunction}.
We know from
Theorem~\ref{maintheorem}
that
\begin{align}\label{NewtonPotentialTheorem}
  k^{-1}\HHNorm{u_0}{\dmn}+ \HNorm{u_0}{\dmn}
 + k\TwoNorm{u_0}{\dmn}\leq C \TwoNorm{f}{\dmn}.
\end{align}
Writing $w=u-u_0$, we obtain
\begin{subequations}
\begin{align}
-\text{div}(\sigma(w))-k^2w&= 0 \text{ in } \dmn,\\
\sigma(w) \nu +\ii k w&=g-(\sigma(u_0) \nu+\ii k u_0)=:\tilde{g}
\text{ on } \rob. 
\end{align}
\end{subequations}
 Using the multiplicative trace inequality on $\tilde{g}$, we obtain
 \begin{align*}
\TwoNorm{\tilde{g}}{\rob}\leq &C\left(
\TwoNorm{g}{\rob}+ \norm{u_0}^{1/2}_{H^2(\dmn)}\norm{u_0}^{1/2}_{H^1(\dmn)}
+k\norm{u_0}^{1/2}_{H^1(\dmn)}\norm{u_0}^{1/2}_{L^2(\dmn)}\right)\\
\leq &C\left(\TwoNorm{g}{\rob}+k^{1/2}\TwoNorm{f}{\dmn}\right).
 \end{align*}
The combination of this bound with Lemma~\ref{l:GradEst} results in
 \begin{align*}
  \norm{w}_{1,k}
  &\leq C k^{1/2} \|\tilde g\|_{L^2(\partial\Omega)}
    \leq C \left(k^{1/2} \|\tilde g\|_{L^2(\partial\Omega)}
            + k \|f\|_{L^2(\Omega)} \right)
   .
 \end{align*}
 Including the Newton potential bounds \eqref{NewtonPotentialTheorem},
 we obtain our estimate.
\end{proof}

We now are in a position to derive the so-called inf-sup condition 
also derived in the same way as 
\cite{BrownGallistlPeterseim,Esterhazy}.
The proof will make use of the unique solvability in the case
of constant material coefficients,
see the discussion at the end of
\S\ref{ss:governing}.

\begin{theorem}\label{t:infsup}
 Let $\dmn\subset\,\mathbb{R}^3$ be a bounded 
  open, connected
  Lipschitz domain.
  Then there exists a $C>0$ independent of $k$, such that
  the variational form $\Phi$ in \eqref{varformequation}, satisfies
 \begin{align*}
  \inf_{0\neq u\in H^1(\dmn)^3}\sup_{0\neq v\in H^1(\dmn)^3}\frac{\Re \Phi(u,\bar v)}{\norm{u}_{1,k}\norm{v}_{1,k}}\geq Ck^{-2}.
 \end{align*}
\end{theorem}
\begin{proof}
 Let $u\in H^1(\dmn)^3$ be given.
As in \cite{Esterhazy} we define an 
 auxiliary solution $z\in H^1(\dmn)^3$ satisfying
 \begin{align}
\label{aboverelation}
 \Phi(w,\bar z)=2k^2(w,u)_{L^2(\dmn)}
 \quad\text{for all }w\in H^1(\Omega)^3,
 \end{align}
 and thus, by Theorem \ref{mainesttheorem},
$\norm{z}_{1,k}\leq Ck \TwoNorm{k^2 u}{\dmn}\leq Ck^3 \TwoNorm{u}{\dmn}$.
 Letting 
 $v=u+z$, and using the relation \eqref{aboverelation}, we obtain 
 \begin{align*}
 \Re\Phi(u,\bar v)
  = \Re\Phi(u,\bar u)+\Re\Phi(u,\bar z)
  =
  \Re\Phi(u,\bar u) + 2k^2(u,u)_{L^2(\dmn)}.
 \end{align*}
 Then, applying   Korn's second  inequality \eqref{korn_second},  we obtain 
\begin{align*}
\Re\Phi(u,\bar v)
=\Re\Phi(u,\bar u)+2\int_{\dmn}k^2 |u|^2dx
&=\int_{\dmn}   \left(\lambda|\text{div}(u)|^2+{2\mu}|\eps(u)|^2\right)dx
   +\int_{\dmn}k^2 |u|^2dx\\
&\geq c\norm{\nabla u}_{L^2(\Omega)}^2 +
     (k^2-C)   \norm{u}_{L^2(\Omega)}^2 
 \geq  C\norm{u}^2_{1,k},
\end{align*}
where again we used $C\lesssim k^2$. 
 Finally, using the bound 
$\norm{v}_{1,k}=\norm{u+z}_{1,k}\leq Ck^2 \norm{u}_{1,k}$,
 we have
 \begin{align*}
  \Re\Phi(u,v)\geq C \norm{u}^2_{1,k} 
 \geq 
 Ck^{-2}\norm{u}_{1,k}\norm{v}_{1,k}.
  \end{align*}
\end{proof}

\section{Multiscale Method}\label{s:multiscale}
In this section we describe the application of 
the multiscale Petrov--Galerkin method (msPGFEM or msPG method)
from \cite{BrownGallistlPeterseim,GallistlPeterseim2015,Peterseim2016}
to the 
elasticity setting.
This method is based on ideas in an  algorithm developed for
homogenization problems in
\cite{Brown.Peterseim:2014,HP13,MP14}
also known as Localized Orthogonal Decomposition.
The ideas have been adapted to the Helmholtz problem for
homogeneous coefficients in \cite{Peterseim2016},
and later presented in the Petrov--Galerkin framework  
\cite{BrownGallistlPeterseim,GallistlPeterseim2015,Peterseim2015survey}.

\subsection{Meshes and Data Structures}
We begin with the basic notation needed regarding the relevant mesh 
and data structures. We keep the presentation general and will link 
it back to the elastodynamics case as we proceed.
Let $\T_H$ be a shape-regular partition of $\Omega$ into 
intervals, parallelograms, parallelepipeds for
$d=1,2,3$, respectively, such that $\bigcup\T_H =\overline\Omega$
and any two distinct $T,T'\in\T_H$ are either disjoint or share
exactly one lower-dimensional hyper-face
(that is a vertex or an edge for $d\in\{2,3\}$ or a face
for $d=3$).
We suppose the mesh is quasi-uniform.
For simplicity, we are considering quadrilaterals (resp. hexahedra) with
parallel faces.

Given any subdomain $S\subset\overline\Omega$, we define its neighborhood 
to be
\begin{equation*}
\nei(S):=\operatorname{int}
          \Big(\cup\{T\in\T_H\,:\,T\cap\overline S\neq\emptyset  \}\Big).
\end{equation*}
Furthermore, we introduce for any $m\geq 2$ the patch extensions
\begin{equation*}
\nei^1(S):=\nei(S)
\qquad\text{and}\qquad
\nei^m(S):=\nei(\nei^{m-1}(S)) .
\end{equation*}
Note that the shape-regularity implies that there is a uniform bound denoted
$C_{\mathrm{ol},m}$,
on the number of elements in the $m$th-order patch,
$
\#\{ K\in\T_H\,:\, K\subset \overline{\nei^m(T)}\}
\leq C_{\mathrm{ol},m}
$
for all ${T\in\T_H}$.
We will abbreviate $C_{\mathrm{ol}}:=C_{\mathrm{ol},1}$.
The assumption that the coarse-scale mesh $\T_H$
is quasi-uniform implies that $C_{\mathrm{ol},m}$ depends polynomially
on $m$.
The global mesh-size is 
$H:=\max\{\operatorname{diam}(T): T\in\T_H\}$.

We will denote 
$Q_1(\T_H)$ to be  the space of piecewise bilinear.
degree less than or equal to $1$.
The space of globally continuous piecewise first-order polynomials 
is then given by
$
\mathcal{S}^1(\T_H):= C^0(\Omega)\cap Q_1(\T_H),
$
and by incorporating the Dirichlet condition we arrive at 
the standard $Q_1$ finite element space denoted here as 
\begin{equation*}
V_H
:=
[\mathcal{S}^1(\T_H)]^d \cap V
\end{equation*}
where $V$ denotes the energy space from
see \S\ref{ss:varsetting} below.
We use quadrilateral/hexa\-hedral elements
for the ease of presentation, but
the results remain valid for piecewise affine finite elements over
simplicial partitions.

To construct our fine-scale and, thus, multiscale spaces we will need
to define a coarse-grid quasi-interpolation operator. 
For simplicity of presentation,we suppose here that this 
quasi-interpolation is also projective.
We let  $I_H:V\to V_H$ be a surjective
quasi-interpolation operator that
acts as a stable quasi-local projection in the sense that
$I^2_H = I_H$ and that
for any $T\in\T_H$ and all $v\in V$ the following local stability 
result holds
\begin{equation}\label{e:IHapproxstab}
H^{-1}\|v-I_H v\|_{L^2(T)} + \|\nabla I_H v \|_{L^2(T)}
\leq C_{I_H} \|\nabla v\|_{L^2(\nei(T))} .
\end{equation}
Under the mesh condition that 
\begin{equation}\label{e:resolution}
k H \lesssim 1 
\end{equation}
is bounded by a generic constant,
this implies  stability in the $\|\cdot\|_V$ norm
\begin{equation}\label{e:IHapproxstabV}
\|I_H v\|_V \leq C_{I_H,V} \|v\|_V
\quad\text{for all } v\in V,
\end{equation}
with a $k$-independent constant $C_{I_H,V}$. 
One possible choice and which we use in our implementation of the method,
is to define $I_H:=E_H\circ\Pi_H$, where
$\Pi_H$ is the piecewise $L^2$ projection onto $Q_1(\T_H)$
and $E_H$ is the averaging operator that maps $Q_1(\T_H)$ to $V_H$ by
assigning to each free vertex the arithmetic mean of the corresponding
function values of the neighbouring cells.

\subsection{The Variational Setting}
\label{ss:varsetting}
Let $\Omega\subset\mathbb R^d$ for $d=2$ or $d=3$ be a bounded
polygonal Lipschitz domain with disjoint boundary portions
$\Gamma_R$, $\Gamma_D$, $\Gamma_N$
and define the energy space
$V:=\{v\in (H^1(\Omega))^d : v|_{\Gamma_D} = 0\}$
equipped with the norm
$\|\cdot\|_V 
 :=\sqrt{k^2\|\cdot \|_{L^2(\Omega)}^2 +\|\Ctensor^{1/2}\varepsilon \cdot \|_{L^2(\Omega)}^2}
$
(which is, by Korn's inequality, equivalent to
$\|\cdot\|_{1,k}$ from 
the foregoing section).
Here, we define the elasticity tensor $\Ctensor$ to act on a symmetric $d\times d$
matrix $M$ by double contraction as
$\Ctensor M = 2\mu M + \lambda \operatorname{tr}M I_{d\times d}$.
As equation \eqref{varformequation}, define on $V$ the sesquilinear form
\begin{equation}\label{e:DEFa}
a(v,w) := 
(\Ctensor \eps(v),\eps(w))_{L^2(\Omega)}
      -(k^2 v,w)_{L^2(\Omega)}
      + (\ii k v,w)_{L^2(\Gamma_R\cap \partial \Omega)}.
\end{equation}
For a given volume force $f\in L^2(\Omega)^3$ and Robin data
$g\in L^2(\Gamma_R)$,
the elasticity problem in variational form seeks $u\in V$ such that
\begin{equation*}
a(u,v) = (f, v)_{L^2(\Omega)}
  + (g, v)_{L^2(\Gamma_R)}
  \qquad\text{for all } v\in V.
\end{equation*}
For simplicity, we focus on homogeneous Dirichlet and Neumann data.
For the case that $\Gamma_R=\partial\Omega$,
\eqref{e:DEFa} corresponds to
\eqref{varformequation}, and
Theorem~\ref{t:infsup}
proves the stability condition
\begin{equation}\label{e:elasthelmholtzstability} 
\big(\gamma(k,\Omega)\big)^{-1}
\leq
 \inf_{v\in V\setminus\{0\}}
 \sup_{w\in V\setminus\{0\}}
 \frac{\Re a(v,w)}{\|v\|_V\| w\|_V} .
\end{equation}
where $\gamma(k,\Omega)$ depends polynomially
(at most quadratically)
on $k$.
For more general boundary configurations,
the polynomial growth of $\gamma(k,\Omega)$ in
\eqref{e:elasthelmholtzstability} will be imposed as an
\emph{assumption} throughout this numerical methods section.

\subsection{Definition of the Method}\label{ss:defMethod}

The multiscale method is determined by three parameters,
namely the coarse-scale mesh-size $H$, the fine-scale mesh-size $h$, 
and the oversampling parameter $m$. 
We assign to any $T\in\T_H$ its $m$-th order patch
$\Omega_T:=\nei^m(T)$, $m\in \mathbb{N}$, 
and define for any $v,w\in V$ the localized sesquilinear forms of  
\eqref{varform} (resp.\ \eqref{e:DEFa}) to $\Omega_T$ as 
\begin{equation*}
a_{\Omega_{T}}(u,v)
=(\Ctensor \eps(u),\eps(v))_{L^2(\Omega_{T})}
     -(k^2u ,v)_{L^2(\Omega_{T})}
     + (\ii ku,v)_{L^2(\partial\Omega_T\cap  \Gamma_R)}.
\end{equation*}
Similarly, for $T$ we have
\begin{equation*}
a_T(u,v)
=(\Ctensor \eps(u),\eps(v))_{L^2({T})}
      -(k^2 u ,v)_{L^2({T})}
      + (\ii k u,v)_{L^2(\Gamma_R\cap \partial {T})}.
\end{equation*}
Let the fine-scale mesh $\T_h,$ be a global uniform refinement 
of the mesh $\T_H$ over
$\Omega$ and define
\begin{equation*}
V_h(\Omega_T) 
 := \{ v\in [Q_1(\T_h)]^d \cap V\,: v=0\text{ outside }\Omega_T\} .
\end{equation*}
Define the null space of the quasi-interpolation operator $I_H$ by
\begin{equation*}
W_h(\Omega_T) := \{ v_h\in V_h(\Omega_T) \,:\, I_H(v_h) = 0\}.
\end{equation*}
This is the space often referred to as the fine-scale or 
small-scale space. 
Given any scalar nodal basis function $\Lambda_z$
and the vector basis function $\Lambda_z e_j \in V_H$,
$e_j\in\mathbb R^d$ denoting the $j$th Cartesian unit vector,
and 
let 
$\lambda_{z,T}^{(j)}\in W_h(\Omega_T)$
solve the subscale corrector problem
\begin{equation}\label{e:lambdacorrectorproblem}
a_{\Omega_T}(w,\lambda_{z,T}^{(j)}) = a_T(w,\Lambda_z e_j)
\quad\text{for all } w\in W_h(\Omega_T).
\end{equation}
Let $\lambda_z^{(j)}:=\sum_{T\in\T_H} \lambda_{z,T}^{(j)}$
and define the multiscale test function
\begin{equation*}
\widetilde\Lambda_z^{(j)} := \Lambda_z e_j-  \lambda_z^{(j)}.
\end{equation*}
The space of multiscale test functions then reads
\begin{equation*}
\widetilde V_H :=
 \operatorname{span}\{\widetilde\Lambda_z^{(j)}\,:\,
                   z\text{ free vertex in }\T_H,
                   \: j=\in\{1,\dots,d\} \} .
\end{equation*}
We emphasize that the dimension of the multiscale space is the same 
as the original coarse space,
$\dim V_H = \dim \widetilde V_H$. Moreover, the dimension
is independent of the 
parameters $m$ and $h$.
Finally, the multiscale Petrov--Galerkin FEM seeks to find 
$u_H\in V_H$ such that
\begin{equation}\label{e:discreteproblem}
a(u_H,\tilde v_H) = 
(f,\tilde v_H)_{L^2(\Omega)}
  + (g,\tilde v_H)_{L^2(\Gamma_R)}
\quad\text{for all } \tilde v_H\in \widetilde V_H.
\end{equation}

As in \cite{GallistlPeterseim2015}, the error analysis
shows that the choice $H\lesssim k^{-1}$, $m\approx\log(k)$
will be sufficient  to guarantee stability and 
quasi-optimality properties,
provided that $k^\alpha h\lesssim 1$ where $\alpha$ depends on the
stability and regularity of the continuous problem.
The conditions on $h$ are the same as for the standard $Q_1$ FEM on the
global fine scale.
For example, the stability analysis in this paper combined the arguments of
\cite{melenk1995generalized}  shows that in three space dimensions
$k^2 h\lesssim 1$
is sufficient for stability and quasi-optimality
for the case of pure Robin boundary conditions.
More generally,
if the corresponding adjoint problem
\begin{equation*}
 a(w,z)=(f,w)_{L^2(\Omega)}
 \qquad\text{for all }v\in V,
\end{equation*}
admits an elliptic regularity estimate of the form
$$
\|z\|_{H^{1+s}(\Omega)} \leq C \|f\|_{L^2(\Omega)}
\quad\text{for some } 0<s\leq 1
$$
and the stability bound
$$
\|u\|_V\lesssim k^\beta\|f\|_{L^2(\Omega)}
$$
holds in place of Theorem~\ref{mainesttheorem},
then it can be shown that the smallness condition reads
$hk^{(1+\beta)/s}\lesssim 1$
for our low-order discretization.
Such relations are well studied in the acoustic Helmholtz case,
see
\cite[Proof of Prop.\ 8.2.7]{melenk1995generalized},
\cite{Sauter2006},
and can be shown similarly for the elastic Helmholtz setting.

\subsection{Brief Error Analysis}

As the related error analysis of the method and the truncated method 
are well studied \cite{BrownGallistlPeterseim,GallistlPeterseim2015},
we give a brief overview of the main results and error estimates 
available to our multiscale method. The key point being that for our
method to remain pollution free and be computationally tractable, 
the solution must obey polynomial-$k$ growth. This is connected to 
our analysis of the inf-sup condition in Section \ref{kgrowth}.
The polynomial growth of $\gamma(k,\Omega)$ in
\eqref{e:elasthelmholtzstability} for the case of
a pure Robin boundary is verified in this paper.

Throughout this section we assume the natural resolution condition
\eqref{e:resolution}.

\begin{lemma}[well-posedness of corrector problems]
Provided $k H\lesssim$ 1, the corrector problem
\eqref{e:lambdacorrectorproblem} is well-posed.
We have for all $w\in W_h$ equivalence of norms
\begin{equation}\label{e:normequiv}
\| \nabla w\|_{L^2(\Omega)}
\lesssim
\| w\|_V
\lesssim
\| \nabla w\|_{L^2(\Omega)}.
\end{equation}
and coercivity
\begin{equation}\label{e:coercivity}
\|\nabla w\|_{L^2(\Omega)} \lesssim\Re a(w,w).
\end{equation}
\end{lemma}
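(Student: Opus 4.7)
My plan is to leverage the defining property $I_H w = 0$ of elements $w\in W_h$ together with the local stability bound \eqref{e:IHapproxstab} to obtain a coarse-scale Poincar\'e-type estimate $\|w\|_{L^2(\Omega)} \lesssim H\|\nabla w\|_{L^2(\Omega)}$. Indeed, the element-wise bound $\|w\|_{L^2(T)} = \|w - I_H w\|_{L^2(T)} \lesssim H\|\nabla w\|_{L^2(\nei(T))}$ sums (via the uniform overlap constant $C_{\mathrm{ol}}$) to the global estimate. Under the resolution condition $\kappa H \lesssim 1$, this single observation says that the $k$-weighted mass term appearing in the $V$-norm is subordinate to the gradient term on $W_h$, and it will drive both the norm equivalence and the coercivity.

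For the norm equivalence \eqref{e:normequiv}, the bound $\|\nabla w\|_{L^2(\Omega)} \lesssim \|w\|_V$ is immediate from the definition of $\|\cdot\|_V$. For the reverse, I use boundedness of $\Ctensor$ to write $\|w\|_V^2 \lesssim k^2\|w\|_{L^2(\Omega)}^2 + \|\nabla w\|_{L^2(\Omega)}^2$, and then invoke the coarse-scale Poincar\'e estimate together with $kH\lesssim 1$ to absorb the first term into the second.

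For the coercivity \eqref{e:coercivity} (which I read as the squared statement $\|\nabla w\|_{L^2(\Omega)}^2 \lesssim \Re a(w,w)$), I take real parts: the Robin contribution $(\ii k w,w)_{L^2(\Gamma_R)}$ is purely imaginary, so $\Re a(w,w) = (\Ctensor\eps(w),\eps(w))_{L^2(\Omega)} - k^2\|w\|_{L^2(\Omega)}^2$. Pointwise ellipticity of $\Ctensor$ (using $\mu>0$, $\lambda+2\mu/3>0$) followed by Korn's second inequality \eqref{korn_second} yields $\Re a(w,w) \geq c\|\nabla w\|_{L^2(\Omega)}^2 - (C_{\mathrm{Korn}} + k^2)\|w\|_{L^2(\Omega)}^2$, and the mass term is absorbed by $\|w\|_{L^2(\Omega)}^2 \lesssim H^2\|\nabla w\|_{L^2(\Omega)}^2$ combined with $kH\lesssim 1$ (treating $k$ as sufficiently large so that $C_{\mathrm{Korn}}\lesssim k^2$, consistent with the convention adopted in Theorem~\ref{mainesttheorem}).

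Well-posedness of \eqref{e:lambdacorrectorproblem} is then a standard consequence: continuity of $a_{\Omega_T}(\cdot,\cdot)$ on $V\times V$ and of the right-hand side $w\mapsto a_T(w,\Lambda_z e_j)$ on $W_h(\Omega_T)$ is clear from Cauchy--Schwarz and a trace inequality for the Robin term, while the coercivity estimate \eqref{e:coercivity} transferred to the patch furnishes the remaining ingredient for the sesquilinear Lax--Milgram theorem. The step I expect to require the most care is ensuring that the Poincar\'e-type bound and Korn's inequality on the (possibly non-convex) patches $\Omega_T = \nei^m(T)$ carry constants that are independent of $m$ and of the patch location; this is handled by noting that the Poincar\'e-type bound descends from the element-wise identity $w = w - I_H w$ (so the patch geometry is irrelevant), and that Korn's inequality can be applied either element-wise and then summed, or globally on $\Omega$ after zero-extension, exploiting $w = 0$ outside $\Omega_T$.
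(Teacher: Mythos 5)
Your proof follows essentially the same route as the paper: deriving the $W_h$-Poincar\'e estimate $\|w\|_{L^2(\Omega)}\lesssim H\|\nabla w\|_{L^2(\Omega)}$ from $I_Hw=0$ and \eqref{e:IHapproxstab} with finite patch overlap, then using $\kappa H\lesssim 1$ together with Korn's second inequality to absorb the mass term for both the norm equivalence and the coercivity of $\Re a(\cdot,\cdot)$. Your additional remarks on the purely imaginary Robin term, the Lax--Milgram step for the corrector problems, and the patch-independence of constants are correct elaborations of what the paper leaves implicit.
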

\begin{proof}
The first inequality of \eqref{e:normequiv} follows from
Korn's inequality \eqref{korn_second}.
The second estimate follows from the interpolation estimate
\eqref{e:IHapproxstab} and the finite overlap of element patches.
Indeed, for any $w\in W_h$,
\begin{equation*}
k^2 \|w\|_{L^2(\Omega)}^2
=
k^2 \|(1-I_H) w\|_{L^2(\Omega)}^2
\lesssim
(k H)^2 \|\nabla w\|_{L^2(\Omega)}^2.
\end{equation*}
This estimate and Korn's inequality
yield for some constants $c$, $C$ that
\begin{equation*}
\Re a(w,w)
=
\| \Ctensor^{1/2} \varepsilon (w) \|_{L^2(\Omega)}
-k^2 \|w\|_{L^2(\Omega)}
\ge
c \|\nabla w \|_{L^2(\Omega)}
- C (k H)^2 \|\nabla w\|_{L^2(\Omega)}^2,
\end{equation*}
so that, for $(k H)^2 $ small enough, we conclude
the coercivity \eqref{e:coercivity}.
\end{proof}

Provided $h$ is chosen fine enough,
the standard FEM over $\T_h$ is stable in the sense that
there exists a constant $C_{\mathrm{FEM}}$
such that with $\gamma(k,\Omega)$ 
from \eqref{e:elasthelmholtzstability} 
there holds
\begin{equation}\label{e:finehstability}
 \big(C_{\mathrm{FEM}}\gamma(k,\Omega) \big)^{-1}
  \leq
  \inf_{v\in V_h\setminus\{0\}}\sup_{w\in V_h\setminus\{0\}} 
      \frac{\Re a(v,w)}{\|v\|_V\|w\|_V} .
\end{equation}
This is actually a condition on the fine-scale parameter $h$.
In general, the requirements on $h$ depend on the stability of the 
continuous problem.

The following two results follow as in
\cite{GallistlPeterseim2015,Peterseim2016}. Their proofs are
omitted for brevity.

\begin{theorem}[well-posedness of the discrete problem]\label{t:wellposed}
Under the resolution conditions \eqref{e:resolution}
and \eqref{e:finehstability}
and the following oversampling condition
\begin{equation}\label{e:mcondition}
m\geq 
C_1
\lvert\log\big(C \gamma(k,\Omega)\big)\rvert
\end{equation}
problem \eqref{e:discreteproblem} is well-posed and there is
a constant
$C_{\mathrm{PG}}$ satisfying
\begin{equation*}
\big(C_{\mathrm{PG}}\gamma(k,\Omega)\big)^{-1}
\leq
 \inf_{v_H\in V_H\setminus\{0\}}
 \sup_{\tilde v_H\in \widetilde V_H\setminus\{0\}}
 \frac{\Re a(v_H,\tilde v_H)}{\|v_H\|_V\|\tilde v_H\|_V} .
\end{equation*}
\end{theorem}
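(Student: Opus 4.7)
The plan is to compare the localized Petrov--Galerkin method to an idealized version using globally defined correctors, and to quantify the perturbation incurred by truncation to patches of radius $m$. First I would introduce the global correctors $\lambda_z^{(j),\mathrm{id}}\in W_h:=\ker(I_H|_{V_h})$ defined by $a(w,\lambda_z^{(j),\mathrm{id}}) = a(w,\Lambda_z e_j)$ for all $w\in W_h$, whose well-posedness is immediate from the coercivity \eqref{e:coercivity}. The associated ideal test space $\widetilde V_H^{\mathrm{id}}:=\operatorname{span}\{\Lambda_z e_j - \lambda_z^{(j),\mathrm{id}}\}$ then satisfies $a(w,\widetilde\Lambda_z^{(j),\mathrm{id}}) = 0$ for all $w\in W_h$, and $I_H$ restricts to a bijection $\widetilde V_H^{\mathrm{id}}\to V_H$, yielding the direct decomposition $V_h = \widetilde V_H^{\mathrm{id}}\oplus W_h$.

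Next I would establish stability of the idealized PG discretization with inf-sup constant of order $\gamma(\kappa,\Omega)^{-1}$. The argument proceeds through the fine-scale inf-sup \eqref{e:finehstability}: given $u_H\in V_H\subset V_h$, select an optimal fine-scale test function $v_h^\ast$ and split $v_h^\ast = \widetilde v_H + w$ along $V_h=\widetilde V_H^{\mathrm{id}}\oplus W_h$; combining the $W_h$-orthogonality property above with the adjoint-test-function trick from the proof of Theorem~\ref{mainesttheorem} (where the auxiliary problem \eqref{aboverelation} absorbs the indefinite mass term), the $w$-contribution drops out up to controlled perturbations and the $\widetilde v_H$-component inherits the desired lower bound.

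The principal technical ingredient is the $k$-independent exponential decay of the ideal correctors away from the supporting element,
\[
\|\nabla(\lambda_z^{(j),\mathrm{id}}-\lambda_z^{(j)})\|_{L^2(\Omega)} \lesssim e^{-\alpha m}\|\nabla (\Lambda_z e_j)\|_{L^2(T)},
\]
with $\alpha>0$ independent of $k$, $H$, and $h$. I would follow the classical LOD ring-by-ring cutoff argument: introduce cutoff functions adapted to successive layers $\nei^j(T)\setminus\nei^{j-1}(T)$, test the corrector equation against cutoff bumps corrected via $I_H$ to lie in $W_h$, and iterate over layers. The resolution condition $\kappa H\lesssim 1$ of \eqref{e:resolution} is essential here because it allows the indefinite $-k^2(\cdot,\cdot)_{L^2}$ contribution to be absorbed by the coercive gradient term of \eqref{e:coercivity}, so that the decay rate $\alpha$ remains uniform in the wavenumber.

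To conclude, finite overlap of the $m$-th order patches together with the decay estimate yields $\|\widetilde v_H^{\mathrm{id}}-\widetilde v_H\|_V \lesssim \sqrt{C_{\mathrm{ol},m}}\, e^{-\alpha m}\|v_H\|_V$ for the natural coefficient-wise identification between the ideal and truncated test spaces. Choosing $m$ as in \eqref{e:mcondition} makes $C\gamma(\kappa,\Omega)e^{-\alpha m}\leq 1/2$, and a Strang-type perturbation applied to the ideal inf-sup estimate transfers the lower bound to $\widetilde V_H$ with a constant $C_{\mathrm{PG}}$ of the claimed form; well-posedness of \eqref{e:discreteproblem} then follows from the equivalence of inf-sup stability and invertibility for the resulting square linear system. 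The principal obstacle is the exponential decay step: preserving uniformity of $\alpha$ in $k$, rather than accepting a $k$-dependent rate, is what makes the logarithmic oversampling \eqref{e:mcondition} sufficient rather than an algebraic one.
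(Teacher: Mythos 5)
The paper itself omits the proof and refers to the cited works \cite{GallistlPeterseim2015,Peterseim2016}; your overall route---ideal global correctors, $k$-uniform exponential decay under $kH\lesssim 1$, finite-overlap summation, and a perturbation argument closing with \eqref{e:mcondition}---is exactly that standard LOD argument.

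There is, however, a genuine gap in your second paragraph, which is the core ``ideal inf-sup'' step. The orthogonality you correctly derived, $a(w,\widetilde\Lambda_z^{(j),\mathrm{id}})=0$ for $w\in W_h$, has $W_h$ in the \emph{first} slot of $a$. When you split the optimal fine-scale test function $v_h^\ast=\widetilde v_H+w$ along $V_h=\widetilde V_H^{\mathrm{id}}\oplus W_h$ and expand $\Re a(u_H,v_h^\ast)=\Re a(u_H,\widetilde v_H)+\Re a(u_H,w)$, the term $a(u_H,w)$ does \emph{not} ``drop out'': $u_H$ lies in $V_H$, not in $W_h$, so the orthogonality simply does not apply, and it is unclear how invoking the adjoint-test-function trick of Section~\ref{kgrowth} (which concerns the continuous problem and has no fine-scale orthogonality to exploit) resolves this. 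The gap is fillable, but the clean route used in the cited references avoids matching individual test functions entirely: one observes that the ideal PG solution operator equals $I_H\circ R_h$, where $R_h$ is the fine-scale Galerkin solution operator. Indeed, for any $F$ and any $\tilde v_H\in\widetilde V_H^{\mathrm{id}}$, writing $R_hF=I_H R_hF+(1-I_H)R_hF$ with $(1-I_H)R_hF\in W_h$, the orthogonality gives $a(I_H R_h F,\tilde v_H)=a(R_h F,\tilde v_H)=F(\tilde v_H)$; combined with $\dim V_H=\dim\widetilde V_H^{\mathrm{id}}$ this identifies $I_H R_hF\in V_H$ as the ideal PG solution, and the discrete stability $\|I_H R_h F\|_V\lesssim\|R_h F\|_V\lesssim\gamma(\kappa,\Omega)\|F\|_{V'}$ then follows directly from \eqref{e:finehstability} and the $V$-stability \eqref{e:IHapproxstabV} of $I_H$, with no duality argument needed at the discrete level. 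Your third and fourth paragraphs (decay and truncation-perturbation) are sound, with the small caveat that the decay estimate should be stated for the elementwise correctors $\lambda_{z,T}^{(j),\mathrm{id}}-\lambda_{z,T}^{(j)}$ before summing with $C_{\mathrm{ol},m}$, not for the globally assembled $\lambda_z^{(j)}$.
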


\begin{theorem}[quasi-optimality]\label{t:quasiopt}
 The resolution conditions \eqref{e:resolution} and
 \eqref{e:finehstability}
 and the oversampling
 condition \eqref{e:mcondition}
 imply that the solution $u_H$ to
 \eqref{e:discreteproblem} with parameters $H$, $h$, and $m$ 
 and the solution $u_h$ of the standard Galerkin FEM on the mesh
 $\T_h$ satisfy
\begin{equation*}
\|u_h - u_H\|_V
\lesssim
  \|(1-I_H) u_h \|_V
\approx 
  \min_{v_H\in V_H} \| u_h - v_H \|_V .
\end{equation*}
\end{theorem}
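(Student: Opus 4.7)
The plan is to follow the Petrov--Galerkin localized orthogonal decomposition analysis of \cite{GallistlPeterseim2015,Peterseim2016}, adapted to the elastic setting using the coercivity on the fine-scale null space $W_h$ established in the preceding lemma. Since $\widetilde V_H\subset V_h$, both $u_h$ (standard Galerkin on $\T_h$) and $u_H$ (multiscale Petrov--Galerkin) satisfy $a(\cdot,\tilde v_H)=F(\tilde v_H)$ for every $\tilde v_H\in\widetilde V_H$, so the Petrov--Galerkin orthogonality $a(u_h-u_H,\tilde v_H)=0$ holds on $\widetilde V_H$. Setting $e_H:=u_H-I_H u_h\in V_H$ and invoking the discrete inf-sup condition from Theorem~\ref{t:wellposed}, I choose a normalized test function $\tilde v_H\in\widetilde V_H$ with
\[
\|e_H\|_V\lesssim\gamma(\kappa,\Omega)\,\Re a(e_H,\tilde v_H)=\gamma(\kappa,\Omega)\,\Re a((1-I_H)u_h,\tilde v_H).
\]
Because $I_H$ is a projection, $w:=(1-I_H)u_h$ lies in the kernel of $I_H$, and the task is reduced to showing that $|a(w,\tilde v_H)|$ is small uniformly over the unit ball of $\widetilde V_H$.

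This is where the corrector construction \eqref{e:lambdacorrectorproblem} enters. Expanding $\tilde v_H=\sum_{z,j}c_{z,j}\widetilde\Lambda_z^{(j)}$ with $\widetilde\Lambda_z^{(j)}=\Lambda_z e_j-\lambda_z^{(j)}$ and introducing the \emph{ideal} (untruncated) corrector $\lambda_z^{(j),\infty}$ that solves the analogous problem on the whole domain $\Omega$ instead of on $\Omega_T$, the defining property yields $a(w,\Lambda_z e_j-\lambda_z^{(j),\infty})=0$ for every $w$ in the kernel of $I_H$. Consequently
\[
a(w,\tilde v_H)=\sum_{z,j}c_{z,j}\,a\bigl(w,\lambda_z^{(j),\infty}-\lambda_z^{(j)}\bigr),
\]
so the quasi-optimality is reduced to controlling the truncation error between the ideal and the $m$-patch localized correctors.

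The main technical obstacle, and the principal step, is to prove the exponential decay
\[
\|\lambda_z^{(j),\infty}-\lambda_z^{(j)}\|_V\lesssim\exp(-cm)\,\|\Lambda_z e_j\|_V
\]
with $c>0$ independent of $k$. This is a standard Caccioppoli cut-off argument from \cite{HP13,MP14}: introduce a smooth cut-off supported outside $\nei^{m-1}(T)$, test the two corrector problems against the $I_H$-corrected cut-off of the difference, and iterate on concentric annular patches to produce geometric decay in $m$. The three ingredients for the elastic case are (i) the coercivity \eqref{e:coercivity} on $W_h$, which already absorbs the zero-order term $-k^2\|\cdot\|_{L^2}^2$ via Korn's second inequality and the resolution condition $\kappa H\lesssim 1$, (ii) the interpolation stability \eqref{e:IHapproxstab} of $I_H$, and (iii) boundedness of $a(\cdot,\cdot)$ on $V$. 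The vector-valued character of the elastic problem only changes the algebraic form of the bilinear form, not the geometric decay mechanism.

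With this decay at hand, summation over vertices and Cartesian indices, using the polynomially-in-$m$ growing overlap constant $C_{\mathrm{ol},m}$, produces $|a(w,\tilde v_H)|\lesssim\exp(-c'm)\|w\|_V\|\tilde v_H\|_V$ and therefore $\|e_H\|_V\lesssim\gamma(\kappa,\Omega)\exp(-c'm)\|(1-I_H)u_h\|_V$. The oversampling condition \eqref{e:mcondition} is calibrated precisely to absorb $\gamma(\kappa,\Omega)$, leaving $\|e_H\|_V\lesssim\|(1-I_H)u_h\|_V$, and a triangle inequality $\|u_h-u_H\|_V\le\|(1-I_H)u_h\|_V+\|e_H\|_V$ delivers the first estimate. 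Finally, the equivalence $\|(1-I_H)u_h\|_V\approx\min_{v_H\in V_H}\|u_h-v_H\|_V$ follows from the $V$-stability \eqref{e:IHapproxstabV}: for any $v_H\in V_H$ one has $(1-I_H)u_h=(1-I_H)(u_h-v_H)$, so $\|(1-I_H)u_h\|_V\lesssim\|u_h-v_H\|_V$, while taking $v_H=I_H u_h$ gives the reverse bound.
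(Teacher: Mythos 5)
The paper itself omits the proof and simply cites \cite{GallistlPeterseim2015,Peterseim2016}; your argument is a faithful reconstruction of exactly that Petrov--Galerkin LOD chain (Petrov--Galerkin orthogonality from $\widetilde V_H\subset V_h$, discrete inf-sup from Theorem~\ref{t:wellposed}, comparison with the ideal corrector, exponential truncation decay absorbed by the oversampling condition, and the $V$-stability of the projection $I_H$ for the final equivalence), so it matches the intended route. One small point worth tightening if you wrote this out in full: the per-basis-function decay bound $\|\lambda_z^{(j),\infty}-\lambda_z^{(j)}\|_V\lesssim e^{-cm}\|\Lambda_z e_j\|_V$ does not by itself control $\sum_{z,j}|c_{z,j}|\,\|\Lambda_z e_j\|_V$ mesh-uniformly; the standard fix (as in the cited references) is to phrase the decay at the level of the element-wise correction operators $\mathcal C_T$ and sum with element-wise Cauchy--Schwarz and the finite overlap, which is precisely where the polynomially growing $C_{\mathrm{ol},m}$ enters — you mention this constant, so you have the right mechanism, but the $\ell^1$-looking sum in your display should really be replaced by that element-wise estimate.
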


The following consequence of Theorem~\ref{t:quasiopt} states an
estimate for the error $u-u_H$.

\begin{corollary}
Under the conditions of Theorem~\ref{t:quasiopt}, the discrete
solution $u_H$ to \eqref{e:discreteproblem} satisfies with some
constant $C\approx 1$ that
\begin{equation*}
\| u- u_H \|_V
\leq
  \| u- u_h \|_V
  +
  C \min_{v_H\in V_H} \| u_h - v_H \|_V .
\end{equation*}
\qed
\end{corollary}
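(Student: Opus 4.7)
The plan is to obtain the claimed estimate as a direct consequence of Theorem~\ref{t:quasiopt} combined with the triangle inequality, so essentially no new ideas are required.

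First, I would split the error between the continuous solution $u$ and the multiscale solution $u_H$ via the standard Galerkin approximation $u_h\in V_h$ by writing
\begin{equation*}
\| u - u_H \|_V
\leq
\| u - u_h \|_V + \| u_h - u_H \|_V .
\end{equation*}
The first summand is retained in the final bound. The task reduces to controlling $\| u_h - u_H \|_V$.

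Next I would invoke Theorem~\ref{t:quasiopt} under the assumed resolution conditions \eqref{e:resolution} and \eqref{e:finehstability} together with the oversampling condition \eqref{e:mcondition}, which yields the quasi-optimality bound
\begin{equation*}
\| u_h - u_H \|_V
\lesssim
\| (1 - I_H) u_h \|_V
\approx
\min_{v_H \in V_H} \| u_h - v_H \|_V ,
\end{equation*}
where the hidden constant is generic (i.e.\ of order one) and in particular independent of $k$. Collecting the two estimates and absorbing the generic constant into the $C\approx 1$ in front of the minimum yields the claimed inequality.

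There is no real obstacle: the only substantive ingredient is Theorem~\ref{t:quasiopt} itself, which is assumed available, and the triangle inequality provides the rest. The constant is denoted $C\approx 1$ precisely because it originates from the quasi-optimality estimate and is independent of the mesh parameters and of $k$, while the remaining term $\|u-u_h\|_V$ is the best-approximation error of the underlying fine-scale Galerkin method and is not further reduced here.
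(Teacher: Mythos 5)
Your argument is precisely the intended one: the paper states the corollary with a terminal \qed because it is an immediate consequence of Theorem~\ref{t:quasiopt}, obtained by inserting $u_h$ via the triangle inequality and then invoking the quasi-optimality bound $\| u_h - u_H\|_V \lesssim \min_{v_H\in V_H}\|u_h - v_H\|_V$. Your proposal is correct and matches the paper's reasoning.
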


\subsection{Numerical Experiment in 3D}

We present a numerical experiment on the unit cube
$\Omega = (0,1)^3$ with Robin boundary.
The exact solution reads
$$
  u(x) = 
    \frac{1}{k^2 |x+q|}
      \begin{pmatrix}
               \exp(\ii k |x+q|) -1 \\
               \exp(-\ii k |x+q|) -1 \\
               \exp(\ii k |x+q|) -1
      \end{pmatrix}
\quad\text{for }
q = \begin{pmatrix}
          1 \\ 1\\ 1
          \end{pmatrix}
.
$$
The data $f$ and $g$ were computed according to the Lam\'e
coefficients $\mu=1=\lambda$.
We compare the msPG FEM with the standard $Q_1$ FEM 
for wavenumbers $k=16$ and $k=32$
on uniform meshes with mesh size $H=2^{-3},2^{-4},2^{-5}$.
The reference mesh size is $h=1/64$.
Figure~\ref{f:3Dconvergence_k16} compares the normalized errors
in the $\|\cdot\|_{L^2(\Omega)}$ norm and the $\|\cdot\|_V$ norm
for $k=16$.
Figure~\ref{f:3Dconvergence_k32} displays the corresponding 
results for $k=32$
of the $Q_1$ FEM and those of the msPG method with oversampling
parameters $m=1$ and $m=2$.
While the performance of the FEM is dominated by the pollution
effect, the msPG FEM yields accurate results, in particular
for $m=2$. For $k=32$, we observe resonance effects in the
error of the msPG method for meshes close to the resolution
$kH\approx 1$.
Figure~\ref{f:3Derror} displays slice plots of the pointwise
error for the FEM and the msPG method ($m=2$) for $k=32$
on the mesh $\T_H$ with $H=2^{-5}$.

\begin{figure}[pt]
\includegraphics[width=.49\textwidth]{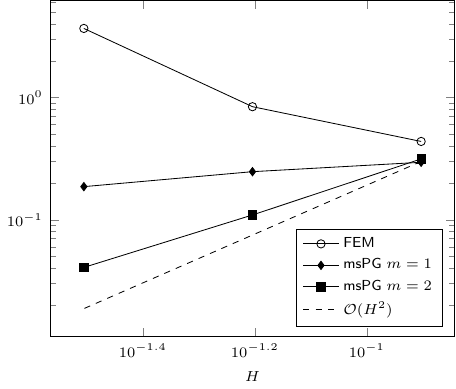}
\includegraphics[width=.49\textwidth]{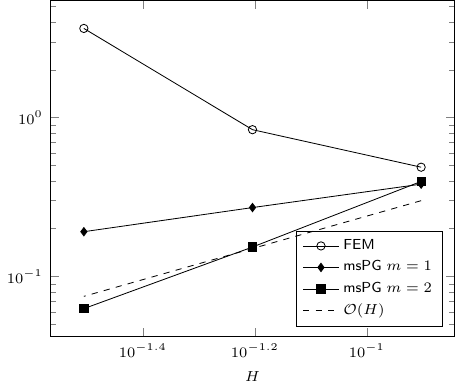}
\caption{%
   Normalized errors  $\|\cdot\|_{L^2(\Omega)}/\| u \|_{L^2(\Omega)}$
   (left) and  $\|\cdot\|_V/\| u \|_V$ (right)
   for the 3D example for $k=16$.
   \label{f:3Dconvergence_k16}
   }
\end{figure}

\begin{figure}[pt]
\includegraphics[width=.49\textwidth]{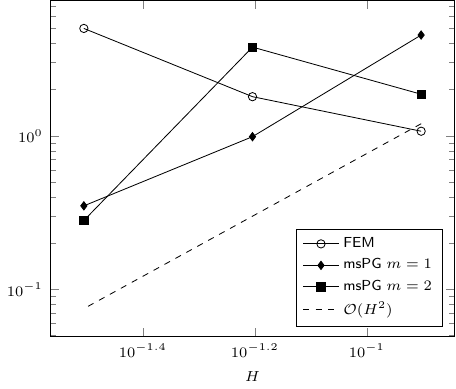}
\includegraphics[width=.49\textwidth]{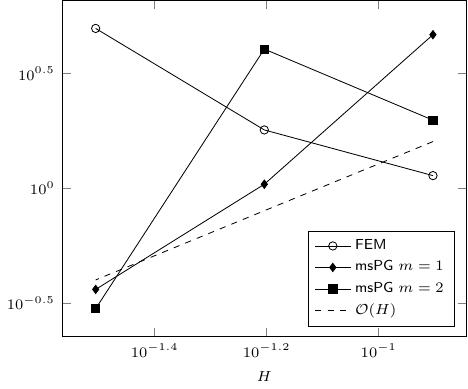}
\caption{%
   Normalized errors  $\|\cdot\|_{L^2(\Omega)}/\| u \|_{L^2(\Omega)}$
   (left) and  $\|\cdot\|_V/\| u \|_V$ (right)
   for the 3D example for $k=32$.   
   \label{f:3Dconvergence_k32}
   }
\end{figure}

\begin{figure}[pt]
\includegraphics[width=.49\textwidth]{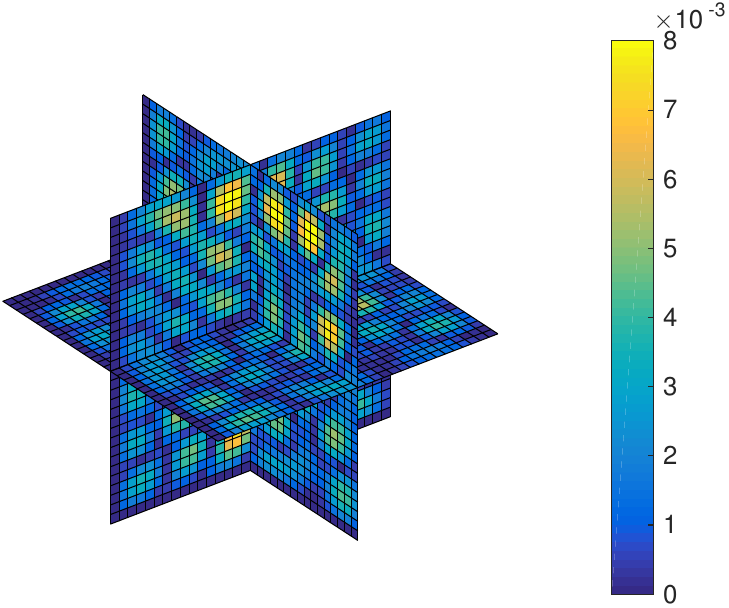}
\includegraphics[width=.49\textwidth]{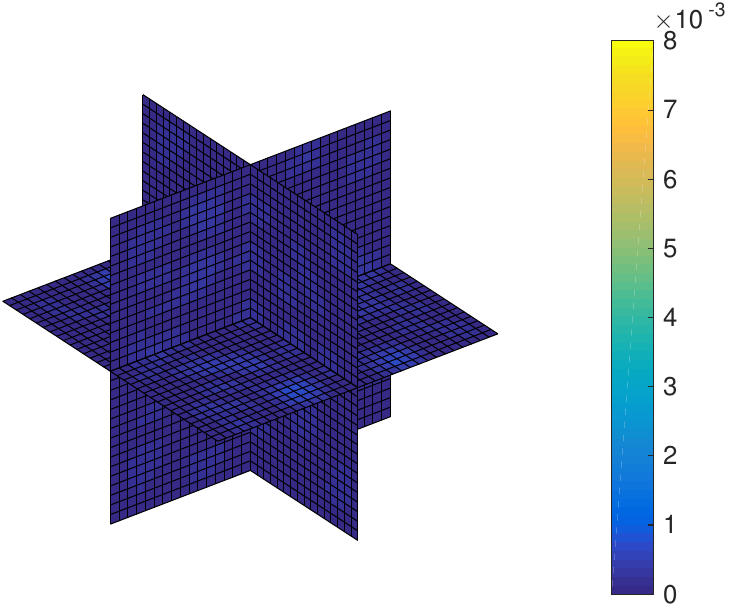}
\caption{%
   Slice plots of the modulus of the pointwise error 
   of the FEM (left) and the msPG method with $m=2$ (right)
   for the 3D example with $k=32$.
   \label{f:3Derror}
   }
\end{figure}

\subsection{Numerical Experiment in 2D}

We consider the square with hole
$\Omega = (0,1)^2\setminus [0.375,0.625]^2$
with Robin boundary conditions 
$\Gamma_R = \partial(0,1)^2$ on the outer boundary and
zero Dirichlet conditions 
$\Gamma_D = \partial[0.375,0.625]^2$ on the inner boundary.
The Robin data is $g=0$
while $f$ is the approximate point source with components
$$
f_j(x) =
\begin{cases}
\exp\left(-\frac{1}{1-(20|x|)^2}\right)& \text{for } |x| < 1/20\\
0 & \text{else}
\end{cases}
\qquad j=1,2.
$$
The Lam\'e parameters are $\mu=1=\lambda$.
The coarse meshes $\T_H$ have mesh sizes
$H=2^{-5},2^{-6},2^{-7},2^{-8}$
and the reference mesh size is $h=2^{-11}$.
Since the exact solution is unknown, we took the finite element
solution with respect to the fine-scale mesh $\T_h$ as a reference
solution.
We chose wavenumbers $k=64$ and $k=128$.
Figure~\ref{f:2Dconvergence_k64} displays the normalized errors in
the $\|\cdot\|_{L^2(\Omega)}$ norm and the $\|\cdot\|_V$ norm
for $k=64$ for the FEM and the msPG method with $m=1$ and $m=2$.
The errors for $k=128$ are shown in
Figure~\ref{f:2Dconvergence_k128}.
Figure~\ref{f:displacement2D} shows the elastic displacement computed
with the msPG method for $m=2$ and $H=2^{-7}$.
In all cases, the msPG approximation has optimal order
under the natural resolution condition
whereas the FEM suffers from pollution.

\begin{figure}[pt]
\includegraphics[width=.49\textwidth]{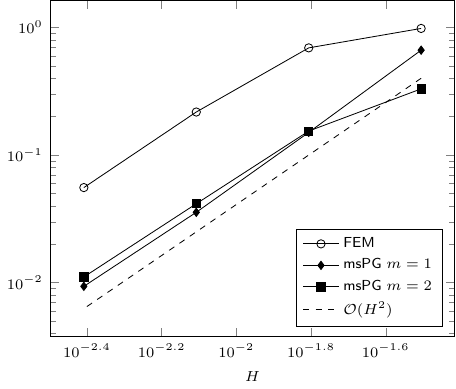}
\includegraphics[width=.49\textwidth]{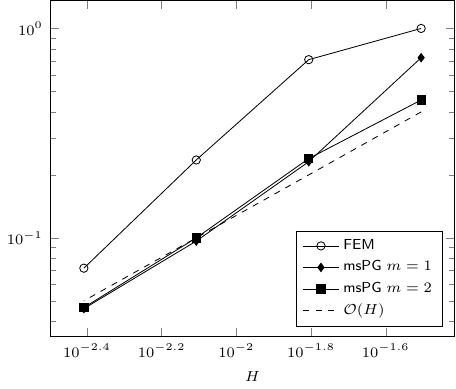}
\caption{%
   Normalized errors  $\|\cdot\|_{L^2(\Omega)}/\| u \|_{L^2(\Omega)}$
   (left) and  $\|\cdot\|_V/\| u \|_V$ (right)
   for the 2D example for $k=64$. 
   \label{f:2Dconvergence_k64}
   }
\end{figure}

\begin{figure}[pt]
\includegraphics[width=.49\textwidth]{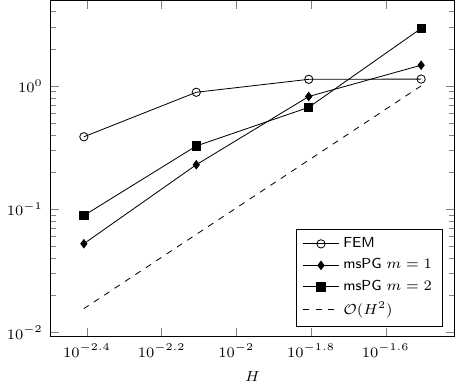}
\includegraphics[width=.49\textwidth]{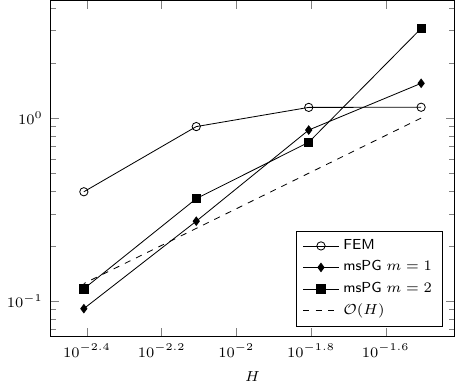}
\caption{%
   Normalized errors  $\|\cdot\|_{L^2(\Omega)}/\| u \|_{L^2(\Omega)}$
   (left) and  $\|\cdot\|_V/\| u \|_V$ (right)
   for the 2D example for $k=128$.
   \label{f:2Dconvergence_k128}
   }
\end{figure}

\begin{figure}[pt]
\centering
\includegraphics[width=.5\textwidth]{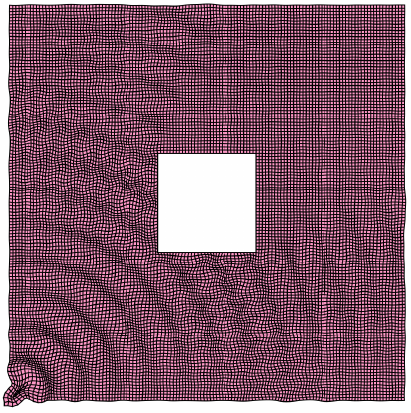}
\caption{%
   Elastic displacement in the 2D experiment for $k=128$ computed 
   with the msPG method with $m=2$ and $H=2^{-7}$;
   amplified by a factor 900.
   \label{f:displacement2D}
   }
\end{figure}

\appendix

\section{Newton Potential Estimates}\label{NPSection} 

In this appendix, we will estimate the Newton potential 
\eqref{newton.layerk}. 
 We utilize Fourier techniques 
 as in \cite{melenk2010convergence}, to 
 calculate the $k$-bounds on $N_k$.
 The main result is the following.

\begin{theorem}\label{maintheoremRestated}
Let $f\in (L^2(\dmn))^3,$  
for the Newton potential \eqref{newton.layerk} we have the estimate
\begin{align}\label{NPHTWORestated}
 k^{-1}\HHNorm{N_{k}^{}(f)}{\dmn}+ \HNorm{N_{k}^{}(f)}{\dmn}
 + k\TwoNorm{N_{k}^{}(f)}{\dmn}\leq C \TwoNorm{f}{\dmn},
\end{align}
where $C>0$ is independent of $k$ and depends only on 
 $\Omega,\mu,\lambda$. 
\end{theorem}

\begin{proof}
 According to the representation \eqref{GreensFunction.main}
 the Newton potential can be split
 as $N_k(f) = N_k^E(f) + N_k^H(f)$ where
 \begin{align*}
  N_k^E(f) (x) = \int_\Omega G_k^E(x-y) f(y) dy
 \quad\text{and}\quad 
  N_k^H(f)(x) = \int_\Omega G_k^H(x-y) f(y) dy.
 \end{align*}
The second part $N_k^H(f)$ is a vector version of the Newton
potential of the acoustic Helmholtz equation, for which the
bounds
$k^{1-m} \|N_k^H(f)\|_{H^m(\Omega)} \leq C \|f\|_{L^2(\Omega)}$
for $m=0,1,2$
have been established in \cite{melenk2010convergence}.
We therefore only need to prove
$k^{1-m} \|N_k^E(f)\|_{H^m(\Omega)} \leq C \|f\|_{L^2(\Omega)}$,
$m=0,1,2$, for the elastic part of the Newton potential.

We start by defining the auxiliary potential
\begin{align}\label{e:tildeNdef}
\widetilde N_k^E(f)(x)
=
\int_\Omega \widetilde G_k^E(x-y)  f(y) dy 
\qquad\text{where}\quad
\widetilde G_k^E(r) = 
\frac{1}{k^2}
\left(\frac{e^{\ii k_2 r}}{r}
     -\frac{e^{\ii k_1 r}}{r}\right)
\end{align}
and observe from the representation 
\eqref{GreensFunction.different} that
\begin{align*}
 |  N_k^E(f)|_{H^m(\Omega)}
 \leq 
C
| \widetilde N_k^E(f)|_{H^{m+2}(\Omega)}.
\end{align*}
The simplification in working with
$\widetilde G_k^E$ is that, in contrast to
$G_k^E$, it only depends on the radial component.
We proceed with a cut-off function argument and using Fourier 
techniques as in \cite{MelenkBIO}. 
Suppose $\dmn \subset B_{R}(0)$ for some $R>0$. 
We extend $f$ to zero when considered outside of $\dmn$ 
into $B_{R}$, but do not relabel. 
We define the cutoff function 
$\eta\in C^{\infty}(\mathbb{R}_{\geq 0})$,
such that $ \operatorname{supp}(\eta) \subset [0,4R]$
\begin{align*}
 \forall x\in \mathbb{R}_{\geq 0}:
   \quad 
 0\leq \eta(x)\leq 1,
 \quad
  |\eta'(x) |\leq C/R,
  \quad
 \eta|_{[0,2R]}=1,
 \quad
 \eta|_{[4R,\infty)}=0.
\end{align*}
We set $M(x):=\eta(|x|)$ and define an augmented Newton potential 
of \eqref{e:tildeNdef} as 
\begin{align}\label{e:augmentedpotential}
 v^{\eta}(x)=\int_{B_{R}(0)}M(x-y) \widetilde G_{{k}}^E(x-y)f(y)dy
\quad
  \text{for } x\in \mathbb{R}^3,
\end{align}
where 
$\widetilde G_k^E$ is given by \eqref{e:tildeNdef}.
 For functions $u$ with compact support, recall the Fourier transform
and the inverse transform are given 
for $x,\xi\in\mathbb R^3$ by 
\begin{align*}
\hat{u}(\xi)=\frac{1}{(2\pi )^{3/2 }}\int_{\mathbb{R}^3}e^{-\ii x\cdot \xi} u(x) dx
\quad\text{ and }\quad
{u}(x)=\frac{1}{(2\pi )^{3/2 }}\int_{\mathbb{R}^3}e^{\ii x\cdot \xi}
   \hat{u}(\xi) d\xi.
\end{align*}
For  $f$ has support in $B_R$ we  write  the truncated Newton 
potential component-wise, using the Einstein summation convention,
as  
$v_i^{\eta}(x)=((M\widetilde G_{k}^E)_{ij}*f_j)(x)$, 
for $i,j=1,2,3$. 
Taking the Fourier transform, using the standard convolution identity,
we obtain
$\hat{v}_i^{\eta}(\xi)
=(2\pi)^{3/2}(\widehat{(M\widetilde G_{k}^E)}_{ij}\hat{f}_j)(\xi)$,
for $i,j=1,2,3$. 
 For a multi-index  $\alpha\in \mathbb{N}_{0}^3$, 
(non-negative integer vectors of dimension 3), we denote
the corresponding multi-index derivatives as $\partial^{\alpha}$ 
in the standard way.
For the corresponding to the derivatives in the Fourier variable, 
we denote 
the function $P_{\alpha}:\mathbb{R}^3\to \mathbb{R}^3$, 
$P_{\alpha}(\xi)=\xi^{\alpha}$. 
For $2\leq |\alpha|\leq 4$, we see using the Plancherel identity that 
\begin{equation*}
\begin{aligned}
 \TwoNorm{\partial^{\alpha}v_i^{\eta}}{\mathbb{R}^3}
  &=\TwoNorm{P_{\alpha}\hat{v}_i^{\eta}}{\mathbb{R}^3}
  =
 (2\pi )^{3/2 }
   \TwoNorm{
    P_{\alpha}\widehat{(M\widetilde G_{k}^E)}_{ij}\hat{f}_j}{\mathbb{R}^3}
 \\
 &\leq  (2\pi )^{3/2 }
  \sup_{\xi\in\mathbb{R}^3}
   \abs{P_{\alpha}(\xi)\widehat{(M \widetilde G_{k}^E)}_{ij}(\xi)}
   \TwoNorm{\hat{f}_j}{\mathbb{R}^3}.
\end{aligned}
\end{equation*}
Thus, our estimate relies on the estimation of the supremum over 
$\xi$ on the last term.
This will be estimated in
Lemma~\ref{technicalGreenLemma} below, where we prove
that
\begin{align*}
  \sup_{\xi\in\mathbb{R}^3}
   \abs{P_{\alpha}(\xi)\widehat{(M \widetilde G_{k}^E)}_{ij}(\xi)}
 \leq 
 C k^{|\alpha|-3}.
\end{align*}
This implies the asserted estimate. 
\end{proof}

 We now state and prove our main technical lemma used in the proof of 
 Theorem~\ref{maintheorem}.

\begin{lemma}\label{technicalGreenLemma}
Let $(\widetilde G_{k}^E)_{ij}$ 
be given by \eqref{e:tildeNdef} and $M$ a cutoff function as above.
Then, there exists a $C>0$ depending only on $R,\mu,\lambda$ and not 
on $k$, so that for 
$|\alpha|=2,3,4$,
\begin{align*}
\sup_{\xi\in\mathbb{R}^3}
 \abs{P_{\alpha}(\xi)\widehat{(M\widetilde G_{k}^E)}_{ij}(\xi)}
\leq C k^{|\alpha|-3},
 \end{align*}
 for $i,j=1,2,3$.
\end{lemma}

\begin{proof}
We proceed as in \cite[Lemma 3.7]{melenk2010convergence}.
Since $G_k^E$ from \eqref{e:tildeNdef} depends only on
the radial component we can write
\begin{align}\label{e:gradial}
 \widetilde G_k^E(r) 
= 
g_k(r) := \frac{1}{k^2}
\left(\frac{e^{\ii k_2 r}}{r}
     -\frac{e^{\ii k_1 r}}{r}\right)
\quad\text{where }r=|x|.
\end{align}
A key observation is that $\widetilde G_k^E(r)\to0$ as $r\to0$, which corresponds to the boundary terms of the following integration by parts vanishing, allowing for higher order derivative estimates.
We then have from the definition of the Fourier transform
and a change of variables to spherical coordinates
$$
\widehat{M\widetilde G_k^E}(\xi)
=
\frac{1}{(2\pi )^{3/2 }}
 \int_{\mathbb{R}^3}e^{-\ii x\cdot \xi} M(x)\widetilde G_k^E(x) dx
=
\frac{1}{(2\pi )^{3/2 }}
 \int_0^\infty  \eta(r) g_k(r) r^2
  \left(\int_{S^2} e^{-\ii \zeta\cdot \xi} ds_\zeta\right)
dr
$$
where $S^2$ is the unit sphere in $\mathbb R^3$.
The inner integral was explicitly computed in
\cite[equation $($3.34$)$]{melenk2010convergence}
and equals
$ 4\pi \sin(r|\xi|)/(r|\xi|)$.
We thus obtain
\begin{align*}
 \widehat{M\widetilde G_k^E}(\xi)
=
\frac{4\pi}{(2\pi )^{3/2 }}
\iota(|\xi|)
\quad\text{with }
\iota(s)
:=
 \int_0^\infty  \eta(r) g_k(r) r^2
  \frac{\sin(rs)}{rs} dr
.
\end{align*}
We closely follow the arguments of
\cite[Lemma~3.7]{melenk2010convergence}
and estimate $s^m \iota(s)$ for $m=2,3,4$.
For $m=2$ we use the representation
\eqref{e:gradial} and integration by parts and compute
\begin{align*}
 |s^2\iota(s)|
&=
k^{-2}
\left|
\int_0^\infty
 \eta(r) (e^{\ii k_2 r}-e^{\ii k_1 r}) s \sin(rs)
dr
\right|
=
k^{-2}
\left|
\int_0^\infty
 \eta(r) (e^{\ii k_2 r}-e^{\ii k_1 r}) \partial_r\cos(rs)
dr
\right|
\\
&=
k^{-2}
\left|
\int_0^\infty
 \partial_r\Big(\eta(r) (e^{\ii k_2 r}-e^{\ii k_1 r}) \Big)\cos(rs)
dr
\right|
\\
&
\leq
k^{-2}
\left|
\int_0^\infty
  \eta'(r) (e^{\ii k_2 r}-e^{\ii k_1 r})\cos(rs)
dr
\right|
+
k^{-2}
\left|
\int_0^\infty
 \eta(r) (k_2 e^{\ii k_2 r}-k_1e^{\ii k_1 r})\cos(rs)
dr
\right| .
\end{align*}
By using the properties of $\eta$ and $\eta'$,
the first term can be bounded by $k^{-2}C$
and the second one by $k^{-1} CR $,
so that
$|s^2\iota(s)|\leq k^{-1} C$.
For $m=3$, in a similar fashion we use integration by parts
twice and obtain
\begin{align*}
 |s^3\iota(s)|
&
=
k^{-2}
\left|
\int_0^\infty
 \eta(r) (e^{\ii k_2 r}-e^{\ii k_1 r}) s^2 \sin(rs)
dr
\right|
\\
&
=
k^{-2}
\left|
\int_0^\infty
 \eta(r) (e^{\ii k_2 r}-e^{\ii k_1 r}) \partial^2_r\sin(rs)
dr
\right|
\\
&
=
k^{-2}
\left|
\int_0^\infty
 \partial^2_r\Big(\eta(r) (e^{\ii k_2 r}-e^{\ii k_1 r}) \Big)\sin(rs)
dr
\right|.
\end{align*}
With arguments analogous to above we see that this is bounded
by a constant $C$.
Finally, for $m=4$ we compute
\begin{align*}
 |s^4\iota(s)|
=
k^{-2}
\left|
\int_0^\infty
 \eta(r) (e^{\ii k_2 r}-e^{\ii k_1 r}) \partial^3_r\cos(rs)
dr
\right|
=
k^{-2}
\left|
\int_0^\infty
 \partial^3_r\Big(\eta(r) (e^{\ii k_2 r}-e^{\ii k_1 r}) \Big)\cos(rs)
dr
\right|
\end{align*}
and see that this is bounded by $Ck$.

In summary, we have shown $|s^m\iota(s)|\leq C k^{m-3}$.
This implies for $2\leq|\alpha|\leq 4$ that
\begin{align*}
\sup_{\xi\in\mathbb{R}^3}
 \abs{P_{\alpha}(\xi)\widehat{(M\widetilde G_{k}^E)}_{ij}(\xi)}
\leq C
\sup_{s\geq 0}
 |s^{|\alpha|} \iota(s) |
\leq C k^{|\alpha|-3},
 \end{align*}
which is the desired bound.
\end{proof}

\begin{acknowledgement}
We thank two anonymous referees who  helped to 
obtain a sharper-in-$k$ stability
bound and who pointed us to a much more direct 
argument
for proof of the stability result compared with a prior manuscript
version of this paper.
\end{acknowledgement}

\begin{funding}
  D.~Gallistl gratefully acknowledges financial support by the 
       DFG through SFB 1173;
       by the 
       Baden-W\"urttemberg Stiftung
       through the project
       ``Mehrskalenmethoden f\"ur Wellenausbreitung in 
       heterogenen Materialien und Metamaterialien'';
       and by the European Research Council
       trough project \emph{DAFNE}, ID 891734.
\end{funding}

\bibliographystyle{abbrv}
\bibliography{ElasticEquation}

\end{document}